\numberwithin{equation}{section}
\newcommand{\ds}{\displaystyle}
\DeclareMathOperator{\hs}{hs}
\DeclareMathOperator{\s}{s}
\DeclareMathOperator{\spo}{spo}
\renewcommand{\sp}{\mathrm{sp}}
\DeclareMathOperator{\oo}{so}
\renewcommand{\oe}{\mathrm{o}^{\text{even}}}
\newcommand{\x}{\bar{x}}
\newcommand{\X}{\overline{X}}
\def\ov{\overline}
\DeclareMathOperator{\hgt}{ht}
\DeclareMathOperator{\VS}{VS}
\DeclareMathOperator{\BS}{BS}
\DeclareMathOperator{\HST}{HS}
\def\n{\none}
\theoremstyle{plain}
\newtheorem{thm}{Theorem}[section]
\newtheorem{cor}[thm]{Corollary}
\newtheorem{lem}[thm]{Lemma}
\theoremstyle{definition}
\newtheorem{defn}[thm]{Definition}
\newtheorem{rem}[thm]{Remark}
\newtheorem{eg}[thm]{Example}
\crefname{thm}{Theorem}{Theorems}
\title[Murnaghan--Nakayama rules for orthosymplectic Schur functions]{Murnaghan--Nakayama rules for symplectic, orthogonal and orthosymplectic Schur functions} 
\author{Nishu Kumari}
\address{Nishu Kumari, 
The Institute of Mathematical Sciences, Chennai, Tamil Nadu 600113, India.}
\email{nishukumari@alum.iisc.ac.in}
\author{Anna Stokke$^{\ast}$}\thanks{$^\ast$ A. Stokke was supported by an NSERC Discovery Grant.}
\address{Anna Stokke, The University of Winnipeg, 515 Portage Avenue, Winnipeg, Manitoba, Canada.}
\email{a.stokke@uwinnipeg.ca}
\date{}
\keywords{Murnaghan-Nakayama rule, Schur function, determinantal formula}
\subjclass[2020]{05E05, 05E10, 05A19}
\begin{document}
\begin{abstract}
We establish new Murnaghan--Nakayama rules for symplectic, orthogonal and orthosymplectic Schur functions.  The classical Murnaghan--Nakayama rule expresses the product of a power sum symmetric function with a Schur function as a linear combination of Schur functions.  Symplectic and orthogonal Schur functions correspond to characters of irreducible representations of symplectic and orthogonal groups.  Orthosymplectic Schur functions arise as characters of orthosymplectic Lie superalgebras and are hybrids of symplectic and ordinary Schur functions. We derive explicit formulas for the product of the relevant  power sum  function with each of these functions, which can partly be described combinatorially using border strip manipulations. Our Murnaghan--Nakayama rules each include three distinct terms: a classical term corresponding to the addition of border strips to the relevant Young diagram, a term involving the removal of border strips, and a third term, which we describe both algebraically and combinatorially.
\end{abstract}
\maketitle
\section{Introduction}

The set of Schur functions, which are indexed by integer partitions, form a basis for the algebra of symmetric functions, so the product of two Schur functions can be expressed uniquely as a linear combination of Schur functions.  The Littlewood--Richardson rule gives a combinatorial description of the coefficients that appear in the expansion.  When one of the Schur functions is indexed by a one-row partition, the resulting combinatorial description is known as Pieri's rule.  

The power sum symmetric functions are another basis of the algebra of symmetric functions and the Murnaghan--Nakayama rule expresses the product of a Schur function  with a power sum symmetric function as a linear combination of Schur functions.  Given a partition $\mu$ and a positive integer $r$,
\begin{equation} \label{eq:mn-rule} p_rs_{\mu}=\sum_{\lambda} (-1)^{\mbox{ht}(\lambda/\mu)}s_\lambda, \end{equation}
where the sum is over all partitions $\lambda \supseteq \mu$, $\lambda/\mu$ is a border strip of length $r$ and $\mbox{ht}(\lambda/\mu)$ is one less than the number of rows in the border strip.  The rule was first proved by Littlewood and Richardson in \cite{littlewood1934} (see Stanely \cite[Pg. 401]{Stanley_2023}), with other proofs given later by Murnaghan \cite{murnaghan} and Nakayma \cite{nakayama}.   Additional proofs are given in \cite{james-1978} and \cite{mendes-2019}.  

Konvalinka gave a quantum version of the Murnaghan--Nakayama rule in \cite{konvalinka-mn} by giving the expansion of a skew Schur function with a $q$-deformation power sum function in terms of skew Schur functions.   Murnaghan--Nakayama rules have been given in several other settings (see for example \cite{jing-liu-mn}, \cite{Nguyen-hiep-mn}, \cite{Benedetti-mn}, \cite{tewari-mn}).

Symplectic and orthogonal Schur polynomials are characters of irreducible representations of symplectic  and orthogonal groups.  They are symmetric Laurent polynomials in the $2n$ variables $x_1^{\pm 1},\ldots,x_n^{\pm 1}$ and can be described combinatorially using symplectic (respectively orthogonal) tableaux. Sundaram gave Pieri rules for symplectic and orthogonal characters in \cite{sundaram-symplectic} and \cite{sundaram-orthogonal}.
Hook Schur polynomials (or supersymmetric Schur polynomials), in $n+m$ commuting variables $x_1,\ldots,x_n,y_1,\ldots,y_m$, are characters of irreducible representations of general linear Lie superalgebras.  Remmel \cite{remmel} proved that hook Schur functions satisfy the same Pieri rule as ordinary Schur functions.  Since hook Schur polynomials can be expressed in terms of Schur polynomials via a plethystic difference operator, a supersymmetric Murnaghan--Nakayama rule follows from the classic Murnaghan--Nakayama rule (see \cref{thm:hook}). 

Orthosymplectic Schur functions, which are characters of modules for orthosymplectic Lie superalgebras, are polynomials in $2n+m$ variables $x_1^{\pm 1},\ldots,x_n^{\pm 1},y_1,\ldots,y_m.$  They can be viewed as hybrids of symplectic Schur functions and ordinary Schur functions (see \cite{benkart-1988}).  A Pieri rule for orthosymplectic Schur functions is given in \cite{stokke-pieri}, where it is shown that the coefficients arising in the expansion  of the product are the same as those that appear in the expansion of the product of the corresponding symplectic Schur functions.  Combinatorial identities for orthosymplectic Schur functions have been given recently in \cite{kumari-orthosymplectic}, \cite{patel-stokke} and \cite{stokke-latticepaths}.

In this paper, we prove symplectic, orthogonal and orthosymplectic Murnaghan--Nakayma rules.  Our symplectic Murnaghan--Nakayama rule, detailed in  \cref{thm:smp}, expresses the product of a symplectic power sum $\overline{p}_r$ with a symplectic Schur polynomial $\sp_{\mu}$ as a linear combination of symplectic Schur polynomials obtained as three distinct sums.  The first sum mirrors \eqref{eq:mn-rule}, and can be found combinatorially by adding border strips of length $r$ to the Young diagram of shape $\mu$, while the second sum comes from removing length $r$ border strips from the Young diagram.  The third sum is more complicated -- we give both an algebraic and combinatorial description in  \cref{sec:s-o-rules}.  The orthogonal Murnaghan--Nakayama rules, given in  \cref{sp-oo} and \cref{sp-eoo} take a similar form.  

Unlike in the Pieri rule setting, the orthosymplectic Murnaghan--Nakayama rule, which we prove in  \cref{spo-MN-m}, does not identically mirror the symplectic Murnaghan--Nakayama rule; the third sum involves both symplectic Schur functions and ordinary Schur functions. 

The paper is organized as follows.  We begin with a preliminary section, \cref{sec:prelim}, where we discuss key symmetric and supersymmetric polynomials, symplectic, orthogonal and orthosymplectic Schur functions, the classic Murnaghan--Nakayama rule and various relevant results.  In \cref{sec:s-o-rules} we prove symplectic and orthogonal Murnaghan--Nakayama rules.  Finally, in \cref{sec:osprule}, we prove an orthosymplectic Murnaghan--Nakayama rule.

\section{Preliminaries}\label{sec:prelim}

In this section, we review relevant definitions and results. For more details, see ~\cite{macdonald-2015} or ~\cite{Stanley_2023}.  

A {\em partition} $\lambda=(\lambda_1,\lambda_2,\dots)$
 is a weakly decreasing sequence of nonnegative 
integers containing only finitely many non-zero terms.  
The {\em length} of a partition $\lambda$, which
is the number of positive parts, is denoted by $\ell(\lambda)$. {The sum of its parts is $\vert \lambda \vert =\sum_{i} \lambda_i$.}
A partition $\lambda$ can be represented pictorially as a {\em Young diagram}, {in which the} $i$th row contains $\lambda_i$ left-justified boxes. {The conjugate of $\lambda$ is $\lambda^\prime=(\lambda_1^\prime,\ldots,\lambda_s^\prime)$, where $\lambda_i^\prime$ is the number of boxes in the $i$th column of the Young diagram of shape $\lambda$.}

{For partitions $\lambda$ and $\mu$, where $\mu \subset \lambda$ \, (i.e. $\mu_i \leq \lambda_i$, for all $i \geq 1)$, the {\emph{skew
shape} $\lambda/\mu$} is the set theoretic difference where
$\lambda$ and $\mu$ are regarded as Young diagrams.} {Define $\vert\lambda/\mu\vert=\vert \lambda \vert - \vert \mu \vert$.}
For example, \cref{fig:lb} shows the Young diagram of $\lambda/\mu=(5,2,2)/(2,1)$.
\begin{figure}[H]
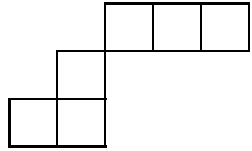

         \centering
\ydiagram{2+3,1+1,0+2}
\vspace{0.1in}
    \caption{The Young diagram of $\lambda/\mu=(5,2,2)/(2,1)$}
    \label{fig:lb}
\end{figure}
{A skew diagram $\lambda/\mu$ is a {\em horizontal $m$-strip} (respectively a {\em vertical $m$-strip}) if $\vert \lambda/\mu \vert =m$ and
$\lambda/\mu$ has at most one box in each column (respectively row).
Two boxes in a skew diagram are {\em adjacent} if they share a common side, and a subset of boxes in a skew diagram is {\em connected} if every pair of boxes is connected through a sequence of adjacent boxes. A {\em border strip} is a skew diagram that  is connected and contains no $2 \times 2$ block of boxes. The {\em height} of a border strip $\lambda/\mu$, denoted ht($\lambda/\mu$), is one less than the number of non-empty rows in $\lambda/\mu$.} We will write $\lambda/\mu \in \BS(r)$ (resp. $\HST(r)$ and $\VS(r)$) to indicate that $\lambda/\mu$ is a border strip (resp. horizontal strip and vertical strip) with $r$ boxes. We write $\lambda/\mu \in \BS$ (resp. $\HST$ and $\VS$) to indicate that $\lambda/\mu$ is a border strip (resp. horizontal strip and vertical strip).

Let $X = (x_1,\dots,x_n)$ be an $n$-tuple of commuting indeterminates. Consider the ring $\mathbb{Z}[X]$ of polynomials in $n$ commuting variables $x_1,\dots,x_n$ with integer coefficients. A polynomial in $\mathbb{Z}[X]$ is {\em symmetric} if it is invariant under the action of permuting the variables. Let $\Lambda$ denote the subring of symmetric polynomials. 

For a partition $\lambda$ of length at most $n$, the polynomial
\[
m_{\lambda}(X) \coloneqq \sum_{\alpha} x_1^{\alpha_1} x_2^{\alpha_2} \cdots x_n^{\alpha_n}, 
\]
where the sum is over all distinct permutations $\alpha=(\alpha_1,\dots,\alpha_n)$ of $\lambda$, is the {\em monomial symmetric function}. 

The {\em elementary symmetric function} $e_{\lambda}(X)$ indexed by a partition $\lambda$ is defined as
\begin{equation}
\label{def:ele}
e_{\lambda}(X) \coloneqq \prod_{i=1}^n e_{\lambda_i}(X),   
\end{equation}
where
\[
 e_r(X) \coloneqq \ds \sum_{1 \leq i_1 < i_2 < \dots < i_r \leq n} x_{i_1} x_{i_2} \dots x_{i_r} 
\text { for } r \geq 1 
\text{ and } e_0(X) \coloneqq 1.
\]
The {\em complete symmetric function} $h_{\lambda}(X)$ indexed by a partition $\lambda$ is defined as
\begin{equation}
\label{def:cmp}
h_{\lambda}(X) \coloneqq \prod_{i=1}^n h_{\lambda_i}(X),    
\end{equation}
where 
\[ h_r(X) \coloneqq  \ds \sum_{1 \leq i_1 \leq i_2 \leq \dots \leq i_r \leq n} x_{i_1} x_{i_2} \dots x_{i_r} 
 \text { for } r \geq 1  
 \text{ and } h_0(X) \coloneqq 1.
\]
{The sets $\{m_\lambda \mid \ell(\lambda) \leq n\}$,  $\{e_{\lambda}(X) \mid \lambda_1 \leq n\}$ and $\{h_{\lambda}(X) \mid \lambda_1 \leq n\}$ form $\mathbb{Z}$-bases for $\Lambda$.} 

The {\em power sum symmetric function} $p_{\lambda}(X)$ indexed by a partition $\lambda$ is defined as
\begin{equation}
\label{def:pwr}
 p_{\lambda}(X) \coloneqq  \prod_{i=1}^n p_{\lambda_i}(X),   
 \end{equation}
where
$ p_r(X) \coloneqq \sum_{i=1}^r x_i^r
    \text{ for } r \geq 1
$ and $p_0(X) \coloneqq 1$. The set $\{p_{\lambda}(X): \lambda_1 \leq n\}$ forms a $\mathbb{Q}$-basis for $\Lambda$.
It is convenient to define $e_r(X)$, $h_r(X)$ and $p_r(X)$ to be zero for $r<0$. 

{Another} $\mathbb{Z}$-basis for the ring of symmetric functions $\Lambda$ is the set of Schur polynomials $\{s_{\lambda}(X) \mid \ell(\lambda) \leq n\}$.
The {\em Schur polynomial} indexed by a partition $\lambda$ is defined as 
\[
s_{\lambda}(X) \coloneqq 
\frac{\ds \det_{1\le i,j\le n}\Bigl(x_i^{\lambda_j+n-j}\Bigr)}
{\ds \det_{1\le i,j\le n}\Bigl(x_i^{n-j}\Bigr)}.
\]
When the context is clear, we will often simply write $s_\lambda$ (resp. $m_\lambda, e_\lambda, h_\lambda, p_\lambda$) instead of $s_\lambda(X)$ (resp. $m_\lambda(X), e_\lambda(X), h_\lambda(X), p_\lambda(X)$).

Define a scalar product on $\Lambda$ by
\[
\langle h_{\lambda}, m_{\mu} \rangle = \delta_{\lambda \mu},
\]
for all partitions $\lambda$ and $\mu$. Then by~\cite[Equation 4.8]{macdonald-2015}, $\langle s_{\lambda}, s_{\mu} \rangle = \delta_{\lambda \mu},$ so that $\{s_{\lambda} \mid \ell(\lambda) \leq n\}$ forms an orthonormal basis of $\Lambda$.

{The following theorem is known as Pieri's rule.  It expresses the product of a Schur polynomial and a complete symmetric function as a linear combination of Schur polynomials}.
\begin{thm}[{\cite[Equation 5.16]{macdonald-2015}}]
    Let $\mu$ be a partition of length at most $n$ and $r \in \mathbb{Z}_{\geq 0}$. Then
    \[
    h_r s_{\mu}=\sum_{\gamma/\mu \, \in\, \HST(r)} s_{\gamma}.
    \]
\end{thm}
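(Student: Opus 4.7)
The plan is to use the orthonormality of the Schur basis together with the adjoint characterization of skew Schur functions, reducing the identity to counting semistandard Young tableaux filled entirely with $1$'s. Since $\{s_\gamma \mid \ell(\gamma)\le n\}$ is an orthonormal basis of $\Lambda$, we can write $h_r s_\mu = \sum_\gamma c_\gamma\, s_\gamma$ with $c_\gamma = \langle h_r s_\mu,\, s_\gamma\rangle$, so it suffices to prove $c_\gamma = 1$ when $\gamma/\mu \in \HST(r)$ and $c_\gamma = 0$ otherwise.

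For the key step, I would invoke the standard adjointness identity (Macdonald, \S I.5) defining the skew Schur function, namely $\langle s_\mu\, f,\, s_\gamma\rangle = \langle f,\, s_{\gamma/\mu}\rangle$ for all $f\in\Lambda$. Specializing to $f=h_r$ gives $c_\gamma = \langle h_r,\, s_{\gamma/\mu}\rangle$. Expanding $s_{\gamma/\mu} = \sum_\lambda K_{\gamma/\mu,\lambda}\, m_\lambda$, where $K_{\gamma/\mu,\lambda}$ counts semistandard tableaux of shape $\gamma/\mu$ with content $\lambda$, and using the duality $\langle h_\lambda, m_\nu\rangle = \delta_{\lambda\nu}$ stated in the preliminaries, we obtain $c_\gamma = K_{\gamma/\mu,(r)}$.

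Finally, $K_{\gamma/\mu,(r)}$ counts fillings of $\gamma/\mu$ with $r$ copies of $1$ that are weakly increasing along rows and strictly increasing down columns. Such a filling exists---and is then unique---precisely when no two boxes of $\gamma/\mu$ share a column, which is exactly the condition that $\gamma/\mu$ be a horizontal $r$-strip. Hence $c_\gamma = [\gamma/\mu \in \HST(r)]$ and the identity follows. The main obstacle is mostly bookkeeping: the excerpt does not explicitly introduce skew Schur functions or their monomial expansion, so one would need to recall these facts briefly (or substitute a direct RSK-style bijection between pairs consisting of a SSYT of shape $\mu$ together with a weakly increasing word of length $r$, and SSYTs of shape $\gamma$ with $\gamma/\mu$ a horizontal $r$-strip).
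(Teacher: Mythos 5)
Your argument is correct, but note that the paper itself offers no proof of this statement: it is quoted as a classical result, cited to Macdonald [Equation 5.16], and all of the paper's own work builds on top of it. Your derivation --- expand $h_r s_\mu$ in the orthonormal Schur basis, use the adjointness characterization $\langle s_\mu f, s_\gamma\rangle=\langle f, s_{\gamma/\mu}\rangle$ of the skew Schur function, then compute $\langle h_r, s_{\gamma/\mu}\rangle=K_{\gamma/\mu,(r)}$ via the monomial expansion and the duality $\langle h_\lambda,m_\nu\rangle=\delta_{\lambda\nu}$, and finally observe that $K_{\gamma/\mu,(r)}=1$ exactly when $\gamma/\mu\in\HST(r)$ and $0$ otherwise --- is essentially the standard proof found in the cited reference, so there is no gap. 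The only point worth making explicit is that the paper defines $s_{\gamma/\mu}$ combinatorially via tableaux (its equation (2.5)), whereas your adjointness identity is the alternative characterization from Macdonald \S I.5; the agreement of the two definitions is the one external fact your proof imports, and it is not itself a consequence of Pieri's rule in Macdonald's development, so no circularity arises. Your suggested fallback (a direct bijection appending a weakly increasing row of $r$ ones' worth of new boxes, one per column, to a semistandard tableau of shape $\mu$) would also work and is closer in spirit to the tableau definition the paper actually uses.
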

\noindent Another version is as follows.
\begin{thm}[{\cite[Equation 5.17]{macdonald-2015}}]
\label{thm:pieri}
    Let $\mu$ be a partition of length at most $n$ and $r \in \mathbb{Z}_{\geq 0}$. Then
    \[
    e_r s_{\mu}=\sum_{\gamma/\mu \, \in\, \VS(r)}  s_{\gamma}.
    \]
\end{thm}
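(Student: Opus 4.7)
The plan is to deduce this vertical-strip Pieri identity from the horizontal-strip version stated just above (Macdonald, (5.16)), by applying the standard involution $\omega : \Lambda \to \Lambda$ that interchanges the $h$- and $e$-bases. Recall that $\omega$ is a ring automorphism with $\omega(h_r) = e_r$ for all $r \geq 0$ and, crucially, $\omega(s_\lambda) = s_{\lambda'}$ for every partition $\lambda$ (see \cite[(2.7) and (3.8)]{macdonald-2015}). Both facts are standard and I would quote them rather than reprove them.

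First, I would work in the ring $\Lambda$ of symmetric functions in infinitely many variables and apply $\omega$ to the identity $h_r\, s_{\mu'} = \sum_{\gamma/\mu' \,\in\, \HST(r)} s_\gamma$ (which is the horizontal-strip Pieri rule applied to the conjugate partition $\mu'$). Since $\omega$ is a ring homomorphism, this yields
\[
e_r\, s_{\mu} \;=\; \sum_{\gamma/\mu' \,\in\, \HST(r)} s_{\gamma'}.
\]
Next I would reindex the right-hand side by setting $\lambda = \gamma'$, so that $\gamma = \lambda'$. The combinatorial heart of the argument is the observation that conjugation interchanges horizontal and vertical strips: since the Young diagram of $\lambda'$ is obtained from that of $\lambda$ by reflection across the main diagonal, the skew shape $\gamma/\mu'$ has at most one box in each column (with $|\gamma/\mu'| = r$) if and only if $\lambda/\mu$ has at most one box in each row (with $|\lambda/\mu| = r$). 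Hence $\gamma/\mu' \in \HST(r) \iff \lambda/\mu \in \VS(r)$, and the sum becomes $\sum_{\lambda/\mu \,\in\, \VS(r)} s_\lambda$, giving the desired identity in $\Lambda$.

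Finally, to descend from $\Lambda$ to the ring $\mathbb{Z}[x_1,\dots,x_n]$ in the statement of the theorem, I would specialize to $n$ variables, noting that $s_\lambda(X) = 0$ whenever $\ell(\lambda) > n$, so any terms with $\ell(\lambda) > n$ automatically drop out of both sides. I do not anticipate a substantive obstacle: the only nontrivial ingredient is the existence of $\omega$ together with $\omega(s_\lambda) = s_{\lambda'}$, which is classical, and the bijection $(\gamma,\mu') \leftrightarrow (\lambda,\mu)$ via conjugation is essentially by inspection of the definitions of $\HST(r)$ and $\VS(r)$ recalled in the preliminary section.
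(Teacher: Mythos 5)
Your argument is correct, and the paper itself gives no proof of this statement --- it simply cites Macdonald's book, where (5.17) is obtained from the horizontal-strip rule (5.16) by exactly the duality you use, namely the involution $\omega$ with $\omega(h_r)=e_r$ and $\omega(s_\lambda)=s_{\lambda'}$ together with the fact that conjugation swaps horizontal and vertical strips. So your proposal is essentially the standard (and the cited) derivation, including the harmless specialization from infinitely many variables to $n$ variables via $s_\lambda(X)=0$ for $\ell(\lambda)>n$.
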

The  Murnaghan--Nakayama rule is the following theorem, which 
expresses the product of a Schur polynomial and a power-sum symmetric function as a linear combination of Schur polynomials.
 \begin{thm}[{\cite[Theorem 7.17.3]{Stanley_2023}}]
\label{schurmn}
Let $\mu$ be a partition of length at most $n$  and $r \in \mathbb{Z}_{\geq 0}$. We have 
\[
p_r s_{\mu}=\sum_{\gamma/\mu \, \in\, \BS(r)} (-1)^{\hgt(\gamma/\mu)} s_{\gamma}.
\]
 \end{thm}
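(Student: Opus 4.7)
The plan is to reduce \cref{schurmn} to the Pieri rule (\cref{thm:pieri} and its companion for $h_r$) via the hook-Schur expansion of the power sum.

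First, I would establish the identity
\begin{equation}\label{eq:hookexp}
p_r = \sum_{k=0}^{r-1} (-1)^k s_{(r-k,\,1^k)},
\end{equation}
which expresses $p_r$ as a signed sum of hook Schur polynomials. This is independent of the Murnaghan--Nakayama rule and can be proved by induction on $r$ using Newton's recurrence $r\,h_r = \sum_{k=1}^r p_k h_{r-k}$ together with the Jacobi--Trudi expansion $s_{(a,1^b)} = h_a e_b - h_{a+1} e_{b-1}$ for hook shapes.

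Second, I would expand each hook via Jacobi--Trudi and apply the two versions of Pieri's rule: for partitions $\nu$,
\begin{equation*}
h_a \, s_\nu = \sum_{\nu'/\nu \,\in\, \HST(a)} s_{\nu'}, \qquad e_b \, s_\nu = \sum_{\lambda/\nu \,\in\, \VS(b)} s_\lambda.
\end{equation*}
Combined with \eqref{eq:hookexp}, the coefficient of $s_\lambda$ in $p_r s_\mu$ becomes a signed sum, indexed by $k$, over chains $\mu \subseteq \nu \subseteq \lambda$ with $|\lambda/\mu|=r$, $\nu/\mu$ a horizontal strip, and $\lambda/\nu$ a vertical strip, where the sign depends only on $|\lambda/\nu|$. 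After reindexing, this reduces to a signed count of ``horizontal-then-vertical'' decompositions of the skew shape $\lambda/\mu$.

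Third, and this is where the main work lies, I would construct a sign-reversing involution on the set of such decompositions of a fixed $\lambda/\mu$. When $\lambda/\mu$ is not a border strip---either it contains a $2\times 2$ block of boxes or is disconnected---the involution pairs all decompositions, so their signed contributions cancel. When $\lambda/\mu$ is a border strip of size $r$, the decompositions are in bijection with the ``breakpoints'' of the strip, and the alternating sum telescopes to $(-1)^{\hgt(\lambda/\mu)}$. The main obstacle is verifying that the involution is well defined, fixed-point free off the border-strip locus, and produces exactly the residual sign $(-1)^{\hgt(\lambda/\mu)}$ in the remaining case; concretely, one locates the topmost-rightmost ``defect'' column (either a disconnection or a cell directly beneath another cell in the next column) and toggles the horizontal/vertical label of a single box. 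This combinatorial step is where the ``no $2\times 2$ block'' condition defining border strips enters naturally, and it is the crux of the argument.
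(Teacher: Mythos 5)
You are proposing a route the paper never takes: the paper does not prove \cref{schurmn} at all (it quotes it from Stanley), so your argument has to stand on its own, and as written it has one concrete error and one large unproved step. The error: the two-term ``Jacobi--Trudi'' formula $s_{(a,1^b)}=h_ae_b-h_{a+1}e_{b-1}$ is false once $b\ge 2$; for instance $h_1e_2-h_2e_1=s_{(1,1,1)}-s_{(3)}\neq s_{(1,1,1)}$. The correct statement is the full alternating sum $s_{(a,1^b)}=\sum_{j=0}^{b}(-1)^j h_{a+j}e_{b-j}$. This is not cosmetic: your expansion $p_r=\sum_{k=0}^{r-1}(-1)^k s_{(r-k,1^k)}$ is true (though your suggested induction for it leans on the same false formula), but substituting the two-term formula into it gives $(-1)^{r-1}(2e_r-h_1e_{r-1})$, which already for $r=3$ is not $p_r$; so the signed sum over chains in your second step is not an expansion of $p_r s_\mu$. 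Using the correct hook expansion instead gives $p_r=\sum_{m=0}^{r-1}(-1)^m(r-m)\,h_{r-m}e_m$, so the coefficient of $s_\lambda$ in $p_r s_\mu$ is a \emph{weighted} count in which a chain $\mu\subseteq\nu\subseteq\lambda$ with $\nu/\mu\in\HST(r-m)$ and $\lambda/\nu\in\VS(m)$ contributes $(-1)^m(r-m)$, not $\pm 1$. Hence your claim that after reindexing ``the sign depends only on $|\lambda/\nu|$'' is wrong, and the plain signed count of horizontal-then-vertical decompositions that your involution is supposed to cancel does not arise.

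The second issue is that the sign-reversing involution, which is where all the content of the theorem sits, is only gestured at. Toggling the label of a single box does flip the parity of $|\lambda/\nu|$, but you must also verify that the new intermediate shape is again a partition with the horizontal-strip/vertical-strip property, that the map is an involution, that it is fixed-point free unless $\lambda/\mu$ is a border strip, and---once the weights $r-m$ are present---that the surviving weighted alternating sum collapses to $(-1)^{\hgt(\lambda/\mu)}$; none of this is carried out, and the last point cannot be the simple telescoping of $\pm1$ terms you describe. The plan can be completed (it is essentially the involution-style proof in the Mendes--Remmel book cited in the paper, or one can bypass the weights by invoking the Littlewood--Richardson rule for hook shapes), but the crux is missing. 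Note also that the paper's own toolkit gives a much shorter proof in its spirit: writing $a_\alpha=\det_{1\le i,j\le n}(x_i^{\alpha_j})$, one has $p_r\,a_{\mu+\delta}=\sum_{q=1}^{n}a_{\mu+\delta+r\epsilon_q}$ (the analogue of \cref{lem:MN} without the $\x$ terms and without the $-r\epsilon_q$ summands), and sorting the exponents exactly as in the first half of the proof of \cref{thm:smp} yields \cref{schurmn} directly.
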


\begin{eg} We have 
$$p_4 s_{(3,1)}=-s_{(3,1,1,1,1,1)}+s_{(3,2,2,1)}-s_{(3,3,2)}-s_{(4,4)}+s_{(7,1)}.$$ The partitions in the expansion, with border strips highlighted, are displayed in \cref{fig:MNeg}.\end{eg}

\begin{figure}[H]
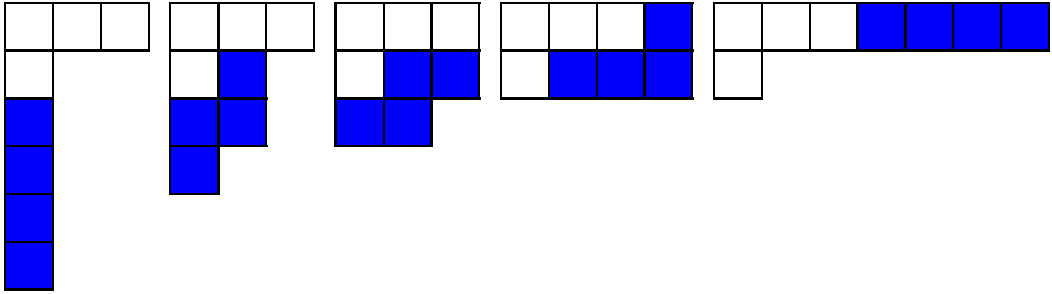

         \centering

\ydiagram[*(blue)]
{0,0,1,1,1,1}
*[*(white)]{3,1,1,1,1,1}\ \
\ydiagram[*(blue)]
{0,1+1,2,1}
*[*(white)]{3,2,2,1}\ \
\ydiagram[*(blue)]
{0,1+2,2}
*[*(white)]{3,3,2}\ \
\ydiagram[*(blue)]
{3+1,1+3}
*[*(white)]{4,4}\ \ 
\ydiagram[*(blue)]
{3+4,1+0}
*[*(white)]{7,1} 
\vspace{0.1in}
    \caption{The partitions in the expansion $p_4s_{(3,1)}$}
    \label{fig:MNeg}
\end{figure}


{Skew Schur polynomials $s_{\lambda/\mu}(X)$   are 
generalizations of Schur polynomials.} The combinatorial objects associated with $s_{\lambda/\mu}(X)$ are semistandard tableaux of shape $\lambda/\mu$. 
A {\em semistandard tableau} of shape $\lambda/\mu$ is a filling of the Young diagram  $\lambda/\mu$ with entries in $\{1, 2, \dots,n \}$ such that entries increase weakly across rows and strictly down columns. 

Define the
{\em skew Schur polynomial} $s_{\lambda/\mu}(X)$ as 
\begin{equation}
\label{def:tab-skew-schur}
s_{\lambda/\mu}(X)=\sum_{T} \left(\prod_{i=1}^n  x_i^{n_i(T)} \right),    
\end{equation}
where the sum is taken over all semistandard tableaux of shape $\lambda/\mu$ and $n_i(T)$, $i \in [n]$ is the number of occurrences of $i$ in $T$. {When $\mu=\emptyset$, $s_{\lambda/\mu}$ is an ordinary Schur polynomial.}  Since the entries increase strictly down columns in a tableau,  
\begin{equation}
\label{ss-rec}
    s_{\lambda/\mu}(X)= \sum_{\substack{\mu \,\subset \, \nu \, \subset \, \lambda \\
    \lambda/\nu \, \in \,\HST}} s_{\nu/\mu}(x_1,\dots,x_{n-1}) 
    x_n^{|\lambda|-|\nu|}.
\end{equation}
For each symmetric function $f \in \Lambda$, {define $f^{\perp}: \Lambda \rightarrow \Lambda$ to be the adjoint of multiplication by $f$},
i.e. $\langle f^{\perp} u, v \rangle=\langle u, fv \rangle$ for all $u,v \in \Lambda$ (see~\cite[Example I.5.3]{macdonald-2015}).
Suppose $\lambda$ is a partition of length at most $n$. Then 
 
 \[
 \langle  p_r^{\perp} s_{\lambda}, s_{\mu} \rangle =  
 \langle  s_{\lambda}, p_r s_{\mu} \rangle = \Big \langle   s_{\lambda}, \sum_{\eta/\mu \, \in \, \BS(r)} (-1)^{\hgt(\eta/\mu)} s_{\eta} \Big \rangle = \begin{cases}
     (-1)^{\hgt(\lambda/\mu)}  & \mbox{ if } \lambda/\mu \in \BS(r) \\
     0 & \mbox{ if } \lambda/\mu \not \in \BS(r),
 \end{cases}
 \]
where the second equality uses \cref{schurmn}. It follows that
 \begin{equation}
 \label{pperp}
 p_r^{\perp} s_{\lambda} 
 = \sum_{\xi} \langle  p_r^{\perp} s_{\lambda}, s_{\xi} \rangle s_{\xi} = 
 \sum_{\lambda/\xi \, \in \, \BS(r)}  (-1)^{\hgt(\lambda/\xi)} s_{\xi}.
 \end{equation}
Using \cref{thm:pieri}, we have
  \[
 \langle  e_r^{\perp} s_{\lambda}, s_{\mu} \rangle =  
 \langle  s_{\lambda}, e_r s_{\mu} \rangle = \Big \langle   s_{\lambda}, \sum_{\zeta/\mu \, \in \, \VS(r)}  s_{\zeta} \Big \rangle = \begin{cases}
     1 & \mbox{ if }\lambda/\mu \in \VS(r) \\
     0 & \mbox{ if }\lambda/\mu \not \in \VS(r).
     \end{cases}
 \]
Therefore,
  \begin{equation}
  \label{eperp}
 e_r^{\perp} s_{\lambda} = \sum_{\pi} \langle  e_r^{\perp} s_{\lambda}, s_{\pi} \rangle s_{\pi} = 
 \sum_{\lambda/\pi \, \in \, \VS(r)}  s_{\pi}.
 \end{equation}

The characters of irreducible polynomial representations of the symplectic and orthogonal groups  are symmetric Laurent polynomials indexed by partitions. They are given by the following Weyl character formulas. Define $\x = 1/x$ for an indeterminate $x$.
 The \emph{odd orthogonal character} 
 indexed by $\lambda = (\lambda_1,\dots,\lambda_n)$ is given by
\begin{equation}
\label{oodef}
\oo_\lambda(X) \coloneqq
\frac{\ds \det_{1\le i,j\le n}\Bigl(x_i^{\lambda_j+n-j+1/2}-\x_i^{\lambda_j+n-j+1/2}\Bigr)}
{\ds \det_{1\le i,j\le n}\Bigl(x_i^{n-j+1/2}-\x_i^{n-j+1/2}\Bigr)}.
\end{equation}
The symplectic character indexed by $\lambda = (\lambda_1,\dots,\lambda_n)$ is given by  
\begin{equation}
\label{symp:def}
   \sp_{\lambda}(X) \coloneqq
\frac{\ds \det_{1\le i,j\le n}\Bigl(x_i^{\lambda_j+n-j+1}-\x_i^{\lambda_j+n-j+1}\Bigr)}
{\ds \det_{1\le i,j\le n}\Bigl(x_i^{n-j+1}-\x_i^{n-j+1}\Bigr)}. 
\end{equation}
Lastly, the \emph{even orthogonal character} 
indexed by $\lambda = (\lambda_1,\dots,\lambda_n)$ is given by
\begin{equation}
\label{oedef}
\oe_\lambda(X) \coloneqq
\frac{2 \displaystyle\det_{1\le i,j\le n}\Bigl(x_i^{\lambda_j+n-j}+\x_i^{\lambda_j+n-j}\Bigr)}
{\displaystyle (1+\delta_{\lambda_n,0})
\det_{1\le i,j\le n}\Bigl(x_i^{n-j}+\bar{x}_i^{n-j}\Bigr)},
\end{equation}
where $\delta$ is the Kronecker delta. 

{The above characters are also referred to as orthogonal Schur polynomials and symplectic Schur polynomials.  They can be described in terms of symplectic and orthogonal tableaux (see \cite{sundaram-symplectic} and \cite{sundaram-orthogonal}).  In the symplectic case, the semistandard symplectic (King) tableaux have entries from $1<\ov{1}<2<\ov{2}<\cdots<n<\ov{n}$.  They are semistandard with respect to this ordering and satisfy the additional condition that every entry in the $i$th row is greater than or equal to $\ov{i}$. We have
$$\ds \sp_\lambda(X)=\sum_T\left (\prod_{i=1}^n x_i^{n_i(T)-n_{\ov{i}}(T)}\right),$$ where the sum runs over all semistandard symplectic $\lambda$-tableaux.}  

We will write $\sp_\lambda$ (resp. $\oo_\lambda$ and $\oe_\lambda$) instead of $\sp_\lambda(X)$ (resp. $\oo_\lambda(X)$ and $\oe_\lambda(X)$) whenever the context is clear.

{\begin{eg} The five semistandard symplectic tableaux of shape $\lambda=(1,1)$ and the corresponding symplectic Schur function are given below.  
$$\begin{ytableau} 1 \cr 2\cr  \end{ytableau} \ \  \begin{ytableau}1 \cr \ov{2}\cr \end{ytableau} \ \ 
\begin{ytableau}\ov{1} \cr 2 \cr \end{ytableau} \ \  \begin{ytableau}\ov{1} \cr \ov{2}\cr\end{ytableau} \ \ 
\begin{ytableau}2 \cr \ov{2} \end{ytableau} \ ; \ \ \ \sp_{(1,1)}(x_1,x_2)=x_1x_2+x_1x_2^{-1}+x_1^{-1}x_2+x_1^{-1}x_2^{-1}+1$$
    \end{eg}} 

\bigskip

We now give a supersymmetric analogue of the ring of  symmetric functions. Suppose $Y=(y_1,\dots,y_m)$. Consider the ring $\mathbb{Z}[X,Y]$ of polynomials in $(n+m)$ commuting variables $x_1,\dots,x_n,$ $y_1,\dots,y_m$ with integer coefficients.
A polynomial $f(X,Y)$ in this ring is {\em doubly symmetric} if it is symmetric in both $(x_1,\dots,x_n)$ and $(y_1,\dots,y_m)$. Moreover, if substituting $x_n=t$ and $y_m=-t$ results in an expression independent of $t$, then we call $f(X,Y)$ a {\em supersymmetric function}. See~\cite{moensthesis} for more details and background. 
The {\em elementary supersymmetric function} indexed by $\lambda=(\lambda_1,\lambda_2,\dots,\lambda_n)$ is given by 
\[
 E_{\lambda}(X/Y) \coloneqq
 \prod_{i=1}^n E_{\lambda_i}(X/Y), 
\]
where
\[
E_r(X/Y) \coloneqq  \sum_{j=0}^{r} 
 e_{j}(X) h_{r-j}(Y), \, r \geq 1 \text{ and } E_0(X/Y) \coloneqq 1.
\]
The {\em complete supersymmetric function} indexed by $\lambda=(\lambda_1,\lambda_2,\dots,\lambda_n)$ is given by 
\[
 H_{\lambda}(X/Y) \coloneqq 
 \prod_{i=1}^n H_{\lambda_i}(X/Y),
\]
where
\[
 H_r(X/Y) \coloneqq  \sum_{j=0}^{r} 
 h_{j}(X) e_{r-j}(Y), \, r \geq 1 \text{ and } H_0(X/Y) \coloneqq 1.
\]
The {\em power sum supersymmetric function} indexed by $\lambda=(\lambda_1,\lambda_2,\dots,\lambda_n)$ is given by
\begin{equation}
\label{def:pwr-sup}
 P_{\lambda}(X/Y) \coloneqq  \prod_{i=1}^n P_{\lambda_i}(X/Y),   
 \end{equation}
where
\[P_r(X/Y) \coloneqq \sum_{i=1}^r (x_i^r+(-1)^{r-1}y_i^r), \, r \geq 1 \text{ and } P_0(X/Y) \coloneqq 1.\]
The {\em hook Schur} or {\em supersymmetric Schur polynomial} indexed by a partition $\lambda$ is given by  
\[
\hs_{\lambda}(X/Y) \coloneqq \sum_{\mu} \s_{\mu}(X) \s_{\lambda'/\mu'}(Y).
\]
Hook Schur polynomials can be described in terms of tableaux that have both a row-strict and a column-strict part.

Note that the hook Schur polynomial $\hs_{\lambda}(X/Y)$ is nonzero if and only if $\lambda_{n+1} \leq m$.  Stembridge~\cite{stembridge1985characterization} showed that the set $\{\hs_{\lambda}(X/Y) \mid 
\lambda_{n+1} \leq m\}$ forms a basis for the ring of supersymmetric functions. Using \eqref{ss-rec}, we have
\begin{equation}
    \hs_{\lambda}(X/Y)=\sum_{\lambda/\pi \, \in \, \VS}  \hs_{\pi}(X/\underline{Y}) y_m^{|\lambda|-|\pi|}.
\end{equation}
We can also write 
supersymmetric functions using the notion of plethystic difference of two sets of variables. 
See~\cite{loehr2011computational} for a nice exposition of combinatorial rules underlying plethystic calculus. We have
\[
P_{\lambda}(X/Y)=p_{\lambda}[X-\epsilon Y]|_{\epsilon=-1} \text{ and }
\hs_{\lambda}(X/Y)=s_{\lambda}[X-\epsilon Y]|_{\epsilon=-1}.\]
As a consequence, we get the following supersymmetric version of the Murnaghan--Nakayama rule given in \cref{schurmn}.
\begin{thm}
\label{thm:hook} 
Suppose $\lambda$ is a partition of length at most $n$ and $r \geq 0$.
We have
\[
P_r(X/Y)\hs_{\lambda}(X/Y)=\sum_{\eta/\lambda \, \in\, \BS(r)} (-1)^{\hgt(\eta/\lambda)} \hs_{\eta}(X/Y).
\]
 \end{thm}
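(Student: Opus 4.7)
The plan is to obtain \cref{thm:hook} as a direct plethystic consequence of the classical Murnaghan--Nakayama rule (\cref{schurmn}). The key observation is that the plethystic substitution $f \mapsto f[X - \epsilon Y]$ defines a ring homomorphism on $\Lambda$: it is determined by its action on the power sums $p_r$ (which generate $\Lambda$ over $\mathbb{Q}$), and so multiplicative identities are preserved when we replace each symmetric function $f$ by $f[X - \epsilon Y]$ and then evaluate at $\epsilon = -1$.

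First I would invoke \cref{schurmn} inside $\Lambda$, writing
\[
p_r \cdot s_{\lambda} \;=\; \sum_{\eta/\lambda \,\in\, \BS(r)} (-1)^{\hgt(\eta/\lambda)}\, s_{\eta},
\]
viewed as an identity of symmetric functions (not yet specialized to any alphabet). Next, I would apply the plethystic map $f \mapsto f[X - \epsilon Y]$ to both sides. Because plethystic substitution is a ring homomorphism on $\Lambda$, this gives
\[
p_r[X - \epsilon Y] \cdot s_{\lambda}[X - \epsilon Y] \;=\; \sum_{\eta/\lambda \,\in\, \BS(r)} (-1)^{\hgt(\eta/\lambda)}\, s_{\eta}[X - \epsilon Y].
\]
Setting $\epsilon = -1$ and using the identifications
\[
P_r(X/Y) \;=\; p_r[X-\epsilon Y]\big|_{\epsilon=-1}, \qquad \hs_{\mu}(X/Y) \;=\; s_{\mu}[X-\epsilon Y]\big|_{\epsilon=-1}
\]
recorded just before the statement of the theorem yields the desired identity.

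The main thing to verify carefully is that the plethystic substitution $f \mapsto f[X - \epsilon Y]|_{\epsilon = -1}$ really does carry products to products after the specialization; this is standard (and spelled out in the reference \cite{loehr2011computational} cited in the excerpt), but it is the only nontrivial ingredient in the argument. Once that is granted, the proof is essentially a one-line transport of the classical Murnaghan--Nakayama rule through the homomorphism, with no additional combinatorial work needed: the border-strip indexing set and the sign $(-1)^{\hgt(\eta/\lambda)}$ are carried along unchanged because they are features of the abstract identity in $\Lambda$, not of any particular specialization.
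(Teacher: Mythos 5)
Your proposal is correct and is essentially the paper's own argument: the paper derives \cref{thm:hook} precisely by noting $P_r(X/Y)=p_r[X-\epsilon Y]|_{\epsilon=-1}$ and $\hs_{\lambda}(X/Y)=s_{\lambda}[X-\epsilon Y]|_{\epsilon=-1}$ and transporting the classical rule of \cref{schurmn} through the plethystic substitution. Your write-up just makes explicit the ring-homomorphism property that the paper leaves implicit, so there is nothing to add.
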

{Orthosymplectic Schur polynomials are hybrids of symplectic Schur polynomials and ordinary Schur polynomials.  }
The {\em orthosymplectic Schur polynomial} indexed by a partition $\lambda$ is given by  
\[
\spo_{\lambda}(X/Y) \coloneqq \sum_{{\mu \subseteq \lambda}} \sp_{\mu}(X) \s_{\lambda'/\mu'}(Y).
\]

Note that  
$\spo_{\lambda}(X;\emptyset)=\sp_{\lambda}(X)$ and, as with hook Schur polynomials,  $\spo_{\lambda}(X/Y)$ is  nonzero if and only if $\lambda_{n+1} \leq m$.  Moreover,
\begin{equation}
\label{sporec}
    \spo_{\lambda}(X/Y)=\sum_{\lambda/\pi \, \in \, \VS}  \spo_{\pi}(X/\underline{Y}) y_m^{|\lambda|-|\pi|}.
\end{equation}

{ Orthosymplectic Schur polynomials are characters of orthosymplectic Lie superalgebras; for further background, see \cite{benkart-1988}.  
A tableaux description of orthosymplectic Schur polynomials is given in \cite{benkart-1988}.  An $\spo(2n,m)$-tableau $T$ has entries in $1<\ov{1}<\cdots < n < \ov{n}<1^\prime < \cdots <m^\prime$.  The portion that contains entries from $\{1, \ov{1},\ldots,n,\ov{n}\}$ is semistandard symplectic and the remaining skew tableau is strictly increasing across rows and weakly increasing down columns. Then $$\ds \spo_\lambda(X/Y)=\sum_T\left (\prod_{\substack{1 \leq i \leq n \\ 1 \leq j \leq m}} x_i^{n_i(T)-n_{\ov{i}}(T)}y_j^{n_{j^\prime}(T)}\right),$$ where the sum runs over all $\spo(2n,m)$-tableaux of shape $\lambda$.}

{\begin{eg}  
(a) The following is an $\spo(4,2)$-tableau of shape $\lambda=(3,2)$: 
\begin{ytableau}2 & 1^\prime & 2^\prime \cr
\ov{2} & 1^\prime \cr \end{ytableau}\\
(b) Let $n=2$ and $m=1$.  Then 
$$\spo_{(1,1)}(X/Y)=
x_1x_2+x_1x_2^{-1}+x_1^{-1}x_2+x_1^{-1}x_2^{-1}+1+x_1y_1+x_1^{-1}y_1+x_2y_1+x_2^{-1}y_1+y_1^2.$$
\end{eg}}

\section{Symplectic and orthogonal Murnaghan--Nakayama rules}\label{sec:s-o-rules}
In this section, we prove Murnaghan--Nakayama {rules} for characters of symplectic and orthogonal groups.
 For $\alpha=(\alpha_1,\dots,\alpha_n) \in \mathbb{N}^n \cup (\mathbb{N}+\frac{1}{2})^n$, let \[
A_{\alpha}
=\det_{1\le i,j\le n}
\Bigl(x_i^{\alpha_j}-\x_i^{\alpha_j}\Bigr).
\]

\noindent {By \eqref{symp:def}, $\displaystyle \mbox{sp}_\lambda(X)=\frac{A_{\lambda+\delta}}{A_\delta},$ where $\delta=(n,n-1,\ldots,1)$.} Write $\X = (\x_1,\dots,\x_n)$ and define $$\overline{p}_r \coloneqq p_r(X,\X)={\sum_{i=1}^n} (x_i^r+\x_i^r).$$
\begin{lem}
\label{lem:MN}

Suppose $\epsilon_j=(0,\dots,0,1,0,\dots,0)$.
We have 
    \begin{equation}
    \label{lem}
         \overline{p}_r A_{\alpha}  = 
     \sum_{j=1}^n A_{\alpha+r \epsilon_j}+\sum_{j=1}^n A_{\alpha-r \epsilon_j}.
    \end{equation}
\end{lem}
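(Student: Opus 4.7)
The plan is to expand $A_\alpha$ via the Leibniz formula, distribute $\overline{p}_r$ across the sum, and then reassemble each piece as a shifted determinant. Writing
\[
A_\alpha = \sum_{\sigma \in S_n} \sgn(\sigma) \prod_{j=1}^n \bigl(x_{\sigma(j)}^{\alpha_j} - \bar{x}_{\sigma(j)}^{\alpha_j}\bigr),
\]
I would multiply by $\overline{p}_r = \sum_{i=1}^n (x_i^r + \bar{x}_i^r)$ and, for each fixed $\sigma$, reindex the sum over $i$ by letting $i = \sigma(k)$, so that the factor $(x_i^r+\bar{x}_i^r)$ merges with the $k$th factor $(x_{\sigma(k)}^{\alpha_k} - \bar{x}_{\sigma(k)}^{\alpha_k})$ in the product.

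The algebraic core is the identity
\[
(x^r + \bar{x}^r)(x^a - \bar{x}^a) = (x^{a+r} - \bar{x}^{a+r}) + (x^{a-r} - \bar{x}^{a-r}),
\]
which is easily verified using $\bar{x} = x^{-1}$ since $x^r \bar{x}^a - \bar{x}^r x^a = -(x^{r-a} - \bar{x}^{r-a}) = x^{a-r} - \bar{x}^{a-r}$. Applied to the $k$th factor, this splits the merged factor into two pieces corresponding respectively to shifting $\alpha_k$ up and down by $r$.

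After this splitting, each of the two pieces gives, after summing over $\sigma$, a Leibniz expansion that I can recognize as $A_{\alpha + r\epsilon_k}$ or $A_{\alpha - r\epsilon_k}$ respectively. Summing over $k$ (which absorbs the sum over $i$) produces exactly the right-hand side of \eqref{lem}. The only subtle point to check is the reindexing: the sum over $i \in \{1,\dots,n\}$ becomes a sum over $k \in \{1,\dots,n\}$ because $\sigma$ is a bijection, and the factors with $j \neq k$ in the product stay intact, so the determinantal structure is preserved.

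I expect no serious obstacle; the only bookkeeping that needs care is confirming the sign, namely that the minus sign in $x^a - \bar{x}^a$ combined with the minus sign in $x^r \bar{x}^a - \bar{x}^r x^a$ produces a genuine $+A_{\alpha - r\epsilon_k}$ rather than $-A_{\alpha - r\epsilon_k}$. This is exactly what the identity above ensures. The argument works uniformly whether $\alpha \in \mathbb{N}^n$ or $\alpha \in (\mathbb{N}+\tfrac12)^n$, since the identity is a formal Laurent-polynomial identity in $x$.
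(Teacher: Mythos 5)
Your argument is correct, and it takes a genuinely different route from the paper's. The paper proves \cref{lem:MN} by induction on $n$: it expands $A_\alpha$ along the entries involving $x_n$ (writing $\phi_s=x_n^s-\bar{x}_n^s$), applies the induction hypothesis to the order-$(n-1)$ minors, and reassembles those terms together with the extra contributions $\psi_{s+r}=\phi_{s+r}+\phi_{s-r}$ coming from $x_n^r+\bar{x}_n^r$ into the cofactor expansions of $A_{\alpha\pm r\epsilon_j}$. You instead expand $A_\alpha$ once and for all by the Leibniz formula, reindex $\sum_i$ as $\sum_k$ via $i=\sigma(k)$, and split the merged factor with the identity $(x^r+\bar{x}^r)(x^a-\bar{x}^a)=(x^{a+r}-\bar{x}^{a+r})+(x^{a-r}-\bar{x}^{a-r})$, recognizing each resulting piece, summed over $\sigma$, as the Leibniz expansion of $A_{\alpha+r\epsilon_k}$ or $A_{\alpha-r\epsilon_k}$. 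This is sound: the Leibniz-level bookkeeping is exactly what is needed, since the multiplier $x_i^r+\bar{x}_i^r$ depends on the row index, so a naive appeal to column multilinearity would not suffice, but absorbing the factor into whichever column the permutation sends to row $i$ does the job. What your route buys is a shorter, induction-free proof that works verbatim for integer or half-integer $\alpha$ (and for $\alpha-r\epsilon_k$ with negative entries, since everything is a Laurent-polynomial identity), and it transfers immediately to the even orthogonal determinant of \cref{lem:even} by replacing the key identity with $(x^r+\bar{x}^r)(x^a+\bar{x}^a)=(x^{a+r}+\bar{x}^{a+r})+(x^{a-r}+\bar{x}^{a-r})$, a case the paper only asserts by analogy; the paper's inductive Laplace-expansion argument is longer but stays within the same cofactor manipulations it reuses later. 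One small correction to your verification sentence: the cross terms in the product are $\bar{x}^rx^a-x^r\bar{x}^a$, and $x^r\bar{x}^a-\bar{x}^rx^a$ equals $x^{r-a}-\bar{x}^{r-a}$, not its negative; your displayed identity is nevertheless correct as stated, so nothing downstream changes.
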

\begin{proof}
 We proceed by induction on $n$. If $n=1$, then 
 \[
  p_r(x_1,\x_1) \Bigl(x_1^{\alpha_1+1}-\x_i^{\alpha_1+1}\Bigr) = x_1^{\alpha_1+r+1}-\x_i^{\alpha_1+r+1} + x_1^{\alpha_1-r+1}-\x_i^{\alpha_1-r+1},
 \]
and \eqref{lem} holds. 
 Assume \cref{lem:MN} holds if the order of $A_{\alpha}$ is strictly less than $n$.
 Let $\phi_r=x_n^{r}-\x_n^r$, $\Delta_j=(\underbrace{1,\dots,1}_j,0,\dots,0)$ and $\alpha^j=(\alpha_1,\dots,\widehat{\alpha}_j,\dots,\alpha_n)$. Notice that 
  \[
A_{\alpha}=(-1)^{n+1}\phi_{\alpha_1+n} A_{\alpha^1}+(-1)^{n+2}\phi_{\alpha_2+n-1} A_{\alpha^2+\Delta_1}+
     \dots+
     (-1)^{2n}\phi_{\alpha_n+1} A_{\alpha^n+\Delta_{n-1}}.
     \]
Let $B_{\alpha}=\sum_{j=1}^{n} (A_{\alpha+r \epsilon_j}+A_{\alpha-r \epsilon_j})$ for $\alpha=(\alpha_1,\dots,\alpha_n)$ and $\psi_{s+r}=\phi_{s+r}+\phi_{s-r}$. We have 
\begin{equation*}
    \begin{split}
         p_r(X,\X) A_{\alpha} =
         \Bigg( (-1)^{n+1}\phi_{\alpha_1+n}   
     B_{\alpha^1} &+  (-1)^{n+2}\phi_{\alpha_2+n-1}
     B_{\alpha^2+\Delta_1}+
     \dots \\
     +(-1)^{2n}\phi_{\alpha_n+1} 
      & B_{\alpha^n+\Delta_{n-1}} + (-1)^{n+1} \psi_{\alpha_1+n+r} A_{\alpha^1}
     \\
     +(-1)^{n+2}& \psi_{\alpha_2+n-1+r} A_{\alpha^2+\Delta_1}+
     \dots+
     (-1)^{2n}\psi_{\alpha_n+1+r} A_{\alpha^n+\Delta_{n-1}} \Bigg).
    \end{split}    
\end{equation*}
Since
    \begin{equation*}
        \begin{split}
         A_{\alpha+r \epsilon_j} = & 
     (-1)^{n+1}\phi_{\alpha_1+n} A_{\alpha^1+r\epsilon_{j-1}} 
     +\dots + 
     (-1)^{n+j-1} \phi_{\alpha_{j-1}+n-j+2} A_{\alpha^{j-1}+\Delta_{j-2}+r \epsilon_{j-1}}\\
    & \qquad \qquad \qquad + (-1)^{n+j} \phi_{\alpha_j+n+1-j+r} A_{\alpha^j+\Delta_{j-1}} +(-1)^{n+j+1} \phi_{\alpha_{j+1}+n-j} A_{\alpha^{j+1}+\Delta_j+r \epsilon_j}
    \\
    & \qquad \qquad \qquad \qquad \qquad   + \dots 
     + (-1)^{2n} \phi_{\alpha_n+1} A_{\alpha^n+\Delta_{n-1}+r\epsilon_j},
      \end{split}
    \end{equation*}
\eqref{lem} follows.  
\end{proof}

{We now define some key terms that will be used throughout the remainder of the paper.}
Given a partition $\mu$ of length at most $n$ and $\delta=(n,n-1,\ldots,1)$, define
\begin{equation}
\label{m}
 m(\mu)=
\begin{cases}
0 & \mbox{ if } \mu_i+\delta_i <r \mbox{ for } i \in [1,n]\\
\mbox{max}\{ i \mid \mu_i +\delta_i \geq r\} & \mbox{otherwise}.\\
\end{cases}   
\end{equation}


\begin{defn}
\label{def:muq}
    Let $\mu$ be a partition of length at most $n$, $\delta=(n,n-1,\dots,1)$, $r \geq 1$  an integer 
    and $q \in [1,n]$. If $\mu_q+\delta_q-r < 0$, {then arrange 
    \[
    (\mu_1+\delta_1,\dots,\mu_{q-1}+\delta_{q-1},r-(\mu_q+\delta_q),\mu_{q+1}+\delta_{q+1},\dots,\mu_n+\delta_n)
    \]}in descending order to form $\Delta_q$. Define $\mu^{(q)}=\Delta_q-\delta$. 
If $r-(\mu_q+\delta_q)=\mu_j+\delta_j$ for some $j \neq q$, then $\mu^{(q)}$ will not be a partition and we take $\sp_{\mu^{(q)}}=0$. Otherwise,   $\mu^{(q)}$ will be a partition given as follows. Suppose $p(q)$ is the index of $r-(\mu_q+\delta_q)$ in $\Delta_q$. If $p(q) \geq q$,
    \[
   \mu^{(q)}=(\mu_1,\dots,\mu_{q-1},\mu_{q+1}-1,\dots,\mu_{p(q)}-1,r-(\mu_q+\delta_q+\delta_{p(q)}),
   \mu_{p(q)+1},\dots,\mu_n).
    \]
Otherwise, i.e., if $p(q) < q$, then
    \[
   \mu^{(q)} = (\mu_1,\dots,\mu_{p(q)-1}, r-(\mu_q+\delta_q+\delta_{p(q)}),
   \mu_{p(q)}+1,\dots,\mu_{q-1}+1,\mu_{q+1},\dots,\mu_n).
    \]    
\end{defn}

\bigskip

{\begin{eg} Let
    $\mu=(3,2,1,0)$, $n=4$, $r=9$ and $q=3$. Then $\mu+\delta=(7,5,3,1)$.  Rearrange $(7,5,6,1)$ to get $\Delta_q=(7,6,5,1)$. Then $p(q)=2$ and $\mu^{(q)}=(3,3,3,0)$.
\end{eg}}
\bigskip

\noindent {The following is a symplectic version of the Murnaghan--Nakayama rule.}
\begin{thm} 
\label{thm:smp}
 Let $\mu$ be a partition of length at most $n$
 and $r \geq 1$  an integer.
 Then
 \begin{equation}
 \label{sp-MN}
     \overline{p}_r \sp_{\mu} = \sum_{ \eta/\mu \,\in \, \BS(r)} (-1)^{\hgt(\eta/\mu)}  \sp_{\eta}
        +\sum_{\mu/\xi \, \in \, \BS(r)} (-1)^{\hgt(\mu/\xi)} \sp_{\xi}
       + \sum_{q=m(\mu)+1}^n (-1)^{p(q)-q+1} \sp_{\mu^{(q)}}.
 \end{equation}
\end{thm}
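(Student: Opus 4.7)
The plan is to apply \cref{lem:MN} with $\alpha = \mu + \delta$ and divide through by $A_\delta$, giving
\[
\overline{p}_r \sp_\mu \;=\; \sum_{j=1}^n \frac{A_{\mu+\delta+r\epsilon_j}}{A_\delta} \;+\; \sum_{j=1}^n \frac{A_{\mu+\delta-r\epsilon_j}}{A_\delta}.
\]
The task then reduces to identifying each $A_\beta/A_\delta$ as $\pm\sp_\eta$ (or zero) for an appropriate partition $\eta$. Three standard properties of $A_\beta$ will do the bookkeeping: it is alternating, so permuting the entries of $\beta$ introduces the sign of the permutation; it vanishes when $\beta$ has a repeated entry or a zero entry (since $x^0-\x^0=0$); and it is odd in each coordinate, so replacing an entry $-c$ by $c$ multiplies $A_\beta$ by $-1$.

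For the addition sum, the $j$-th entry of $\mu+\delta$ is increased by $r$; after sorting back into strictly decreasing order the enlarged entry lands at some position $k\le j$ (or a repeat occurs and the term vanishes). This is the classical rearrangement used in the proof of \cref{schurmn}: the sorted sequence equals $\eta+\delta$ for a partition $\eta$ such that $\eta/\mu$ is a border strip of length $r$ occupying rows $k,\ldots,j$, so $\hgt(\eta/\mu)=j-k$, and the $j-k$ adjacent transpositions required to sort contribute exactly the sign $(-1)^{\hgt(\eta/\mu)}$. Summing over $j$ yields the first sum in \eqref{sp-MN}.

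For the subtraction sum, I split on whether $j\le m(\mu)$. When $\mu_j+\delta_j\ge r$, the new $j$-th entry is nonnegative; putting aside vanishing terms (zeros or ties), sorting the decreased entry down to position $k\ge j$ produces $\xi+\delta$ for a partition $\xi\subset\mu$ with $\mu/\xi$ a border strip of length $r$ and height $k-j$, contributing sign $(-1)^{\hgt(\mu/\xi)}$. This gives the second sum in \eqref{sp-MN}.

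The third sum handles the remaining indices $j>m(\mu)$, for which $\mu_j+\delta_j<r$, so the $j$-th entry equals $-(r-(\mu_j+\delta_j))<0$. Using the column-wise antisymmetry $x^{-c}-\x^{-c}=-(x^c-\x^c)$, one extracts a factor of $-1$ and replaces the negative entry by $\beta_j=r-(\mu_j+\delta_j)$, yielding exactly the sequence of \cref{def:muq}. If $\beta_j$ coincides with some $\mu_i+\delta_i$ the term vanishes, consistent with the convention $\sp_{\mu^{(j)}}=0$; otherwise sorting $\beta_j$ into position $p(j)$ requires $|p(j)-j|$ adjacent transpositions. Combining the sign flip with the rearrangement sign gives $-(-1)^{|p(j)-j|}=(-1)^{p(j)-j+1}$ (using $(-1)^{|p(j)-j|}=(-1)^{p(j)-j}$), so the contribution is $(-1)^{p(j)-j+1}\sp_{\mu^{(j)}}$. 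The main obstacle is precisely this final sign bookkeeping: one must verify that the insertion rule of \cref{def:muq} faithfully captures the sorting and that the single formula $(-1)^{p(q)-q+1}$ correctly handles both sub-cases $p(q)<q$ and $p(q)\ge q$ uniformly.
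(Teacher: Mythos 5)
Your proposal is correct and follows essentially the same route as the paper: apply \cref{lem:MN} with $\alpha=\mu+\delta$, divide by $A_\delta$, and identify each $A_{\mu+\delta\pm r\epsilon_q}/A_\delta$ via sorting (with the sign of the permutation), vanishing for ties or zero entries, and the antisymmetry $x^{-c}-\x^{-c}=-(x^c-\x^c)$ to flip the negative entry in the case $q>m(\mu)$, producing the three sums with exactly the signs $(-1)^{\hgt(\eta/\mu)}$, $(-1)^{\hgt(\mu/\xi)}$ and $(-1)^{p(q)-q+1}$. The final sign check you flag as the "main obstacle" is handled uniformly just as you indicate, since $(-1)\cdot(-1)^{|p(q)-q|}=(-1)^{p(q)-q+1}$ in both sub-cases, matching \cref{def:muq}.
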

 \begin{proof}
 Suppose $\delta=(n,n-1,\dots,1)$. {We have}
  \begin{equation}
  \label{1}
   p_r(X,\X) \sp_{\mu}(X)  = 
     \sum_{q=1}^n \frac{A_{\mu+\delta+r \epsilon_q}}{A_{\delta}}
     +\sum_{q=1}^n \frac{A_{\mu+\delta-r \epsilon_q}}{A_{\delta}}.
  \end{equation}
 Fix $q \in [1,n]$ and arrange $\mu+\delta+r \epsilon_q$ in descending order to form $\Theta$. If $\Theta$ has two equal coordinates, {then $A_{\mu+\delta+r \epsilon_q}=0$.} 
Otherwise, let $p(q)$ be the index of $\mu_q+\delta_q+r$ in $\Theta$. Note that $p(q) \leq q$
and $A_{\mu+\delta+r \epsilon_j}=(-1)^{p(q)-q} A_{\eta+\delta}$, where    
 \[
 \eta=(\mu_1,\dots,\mu_{p(q)-1},\mu_q+p(q)-q+r,\mu_{p(q)}+1,\dots,\mu_{q-1}+1,\mu_{q+1},\dots,\mu_n).
 \]
 Notice that $\eta/\mu$ is a border strip of height $q-p(q)$.
 Therefore,
 \begin{equation}
 \sum_{q=1}^n \frac{A_{\mu+\delta+r \epsilon_q}}{A_{\delta}}
 =\sum_{\eta/\mu \,\in \, \BS(r)} (-1)^{\hgt(\eta/\mu)} \sp_{\eta}(X).
 \end{equation}
Now we consider the second sum in the right-hand side of \eqref{1}. Fix $q \in [1,m(\mu)]$. Then $\mu_q+\delta_q-r \geq 0$. If $\mu_q+\delta_q-r = 0$, then $A_{\mu+\delta-r \epsilon_q} =0.$
If $\mu_q+\delta_q-r > 0$,  arrange 
$\mu+\delta-r \epsilon_q$ in descending order to get $\Pi$. Notice that {$A_{\mu+\delta-r \epsilon_q}=0$ 
if two coordinates of $\Pi$ are equal, so assume all coordinates of $\Pi$ are distinct and that $p(q)$ is the index of $\mu_q+\delta_q-r$ in $\Pi$.} 
Note that $p(q) \geq q$ and $A_{\mu+\delta-r \epsilon_q}=(-1)^{p(q)-q} A_{\xi+\delta}$, where    
 \[
 \xi=(\mu_1,\dots,\mu_{q-1},\mu_{q+1}-1,\dots,\mu_p(q)-1,\mu_q+p(q)-q-r,\mu_{p(q)+1},\dots,\mu_n)
 \]
and $\mu/\xi$ is a border strip of height $p(q)-q$.  
Therefore,
 \begin{equation}
 \sum_{q=1}^{m(\mu)} \frac{A_{\mu+\delta-r \epsilon_q}}{A_{\delta}}=\sum_{\mu/\xi \,\in \, \BS(r)} (-1)^{\hgt(\mu/\xi)} \sp_{\xi}(X).
 \end{equation}
Lastly fix $q \in [m(\mu)+1,n]$, so that $\mu_q+\delta_q-r\epsilon_q <0$.
If $r-(\mu_q+\delta_q)=\mu_j+\delta_j$ for some $j \neq q$, then $A_{\mu+\delta-r \epsilon_q} =0.$ Otherwise, arrange
\[
    (\mu_1+\delta_1,\dots,\mu_{q-1}+\delta_{q-1},r-\mu_q+\delta_q,\mu_{q+1}+\delta_{q+1},\dots,\mu_n+\delta_n)
    \]
in descending order to obtain $\Delta_q$ and
suppose $p(q)$ is the index of $r-\mu_q+\delta_q$ in $\Delta_q$. Thus
\begin{equation}
 \sum_{q=m(\mu)+1}^{n} \frac{A_{\mu+\delta-r \epsilon_q}}{A_{\delta}}=
 \sum_{q=m(\mu)+1}^n (-1)^{p(q)-q+1} \sp_{\mu^{(q)}},
 \end{equation}
where, if $p(q) \geq q$, we have
    \[
   \mu^{(q)}=(\mu_1,\dots,\mu_{q-1},\mu_{q+1}-1,\dots,\mu_{p(q)}-1,r-(\mu_q+\delta_q+\delta_{p(q)}),
   \mu_{p(q)+1},\dots,\mu_n)
    \]
and, if $p(q) < q$, we have
    \[
   \mu^{(q)}  =(\mu_1,\dots,\mu_{p(q)-1}, r-(\mu_q+\delta_q+\delta_{p(q)}),
   \mu_{p(q)}+1,\dots,\mu_{q-1}+1,\mu_{q+1},\dots,\mu_n).
    \] This completes the proof.
 \end{proof}

 {When $\mu_{n}+1 \geq r$, \cref{sp-MN} can be written more concisely.}
 \begin{cor} 
 Let $\mu$ be a partition of length $n$ such that $\mu_n+1 \geq r$. {Then}  
 \begin{equation}
     \overline{p}_r \sp_{\mu}
       =\sum_{\eta/\mu \, \in \, \BS(r)} (-1)^{\hgt(\eta/\mu)} \sp_{\eta}
       +\sum_{\mu/\xi \, \in \, \BS(r)} (-1)^{\hgt(\mu/\xi)} \sp_{\xi}.
 \end{equation}
 \end{cor}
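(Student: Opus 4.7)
The plan is to deduce the corollary directly from \cref{thm:smp} by showing that the hypothesis $\mu_n+1 \geq r$ forces $m(\mu)=n$, so that the third sum in \eqref{sp-MN} is empty.

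First I would unpack the definition \eqref{m}: $m(\mu)$ is the largest index $i$ for which $\mu_i+\delta_i \geq r$ (and $0$ if no such index exists), where $\delta_i = n-i+1$. The key observation is a short monotonicity statement: the sequence $(\mu_i+\delta_i)_{i=1}^n$ is \emph{strictly} decreasing in $i$. Indeed, since $\mu$ is weakly decreasing and $\delta_i-\delta_{i+1}=1$, we have
\[
(\mu_i+\delta_i) - (\mu_{i+1}+\delta_{i+1}) \;=\; (\mu_i-\mu_{i+1}) + 1 \;\geq\; 1.
\]
Consequently, whenever the inequality $\mu_i+\delta_i \geq r$ holds at $i=n$, it automatically holds for every $i\in[1,n]$.

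Under the hypothesis of the corollary, $\mu_n+\delta_n = \mu_n+1 \geq r$, so by the observation above $\mu_i+\delta_i \geq r$ for all $i\in[1,n]$ and therefore $m(\mu)=n$. With $m(\mu)=n$, the index range $[m(\mu)+1,n]$ is empty, and the third sum $\sum_{q=m(\mu)+1}^{n}(-1)^{p(q)-q+1}\sp_{\mu^{(q)}}$ in \eqref{sp-MN} is vacuous. The remaining two sums are precisely the right-hand side of the claimed identity, so the corollary follows from \cref{thm:smp}.

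There is essentially no obstacle: the argument is a one-line reduction to the monotonicity of $(\mu_i+\delta_i)$, and the only thing to verify is that this monotonicity indeed propagates the condition at $i=n$ to all smaller indices. I would keep the proof short and simply present this observation together with the invocation of \cref{thm:smp}.
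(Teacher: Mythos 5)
Your proposal is correct and matches the paper's (implicit) argument: the hypothesis $\mu_n+\delta_n=\mu_n+1\geq r$, together with the fact that $\mu_i+\delta_i$ is decreasing in $i$, gives $m(\mu)=n$, so the third sum in \eqref{sp-MN} is empty and the corollary is immediate from \cref{thm:smp}. The paper states the corollary without spelling this out, and your one-line monotonicity observation is exactly the intended justification.
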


Combinatorially, partitions in the first sum in \eqref{sp-MN}  are found by adding border strips of length $r$ to $\mu$, while partitions in the second sum arise from removing length $r$ border strips from $\mu$.  Non-zero partitions $\mu^{(q)}$ in the third sum can be found as follows. 
\begin{enumerate}\label{comb}
\item Remove row $q$ from $\mu$.
\item If $p(q)<q$, shift rows $p(q)$ to $q-1$ down one row and add a box to each.
\item If $p(q)>q$, shift rows $q+1$ to $p(q)$ up one row and remove a box from each.
\item Add a row with $r-(\mu_q+\delta_q+\delta_{p(q)})$ boxes in row $p(q)$ to get $\mu^{(q)}$.
\end{enumerate}


\begin{eg}\label{eg:comb}  Suppose $n=3$, $\mu=(4,3,1)$ and $r=6$ and consider the product $\overline{p}_r\sp_\mu$.    Then
 $\mu+\delta=(7,5,2)$ and $m(\mu)=1.$  The partitions that arise from adding length $6$ border strips correspond to the first three summands of \eqref{eg:spproduct} below and the fourth summand comes from removing a length $6$ border strip.  Partitions in the remainder of the expansion arise as follows, with boxes to be removed highlighted.

 {For $q=2$, rearrange $(7,1,2)$ to get $\Delta_q=(7,2,1)$ so $p(q)=3$:}
$$\ydiagram[*(red)]{0,3,0}*[*(white)]{4,3,1}\  \xrightarrow{\hspace{1cm}} \ \ydiagram[*(red)]{0,1}*[*(white)]{4,1}\ \xrightarrow{\hspace{1cm}} \ydiagram{4}$$

{For $q=3$, $\Delta_q=(7,5,4)$ and $p(q)=3$:}
$$\ydiagram[*(red)]{0,0,1}*[*(white)]{4,3,1}\   \xrightarrow{\hspace{1cm}} \ydiagram{4,3,3}$$

\begin{equation}
    \label{eg:spproduct}
{\overline{p}_6
\mbox{sp}_{(4,3,1)}=\sp_{(5,5,4)}-\sp_{(8,5,1)}+\sp_{(10,3,1)}+\sp_{(2)}+\sp_{(4)}-\sp_{(4,3,3)}}
\end{equation}
 \end{eg}

\bigskip

 \cref{thm:oddortho} and \cref{thm:evenortho} are orthogonal Murnaghan--Nakayama rules.  \cref{thm:oddortho} is given by  working with $\delta=(n-1/2,n-3/2,\dots,1/2)$ instead of $\delta=(n,n-1,\dots,1)$ in the proof of \cref{thm:smp}. In the statement of the \cref{thm:oddortho} below, take $\delta = (n-1/2,n-3/2,\dots,1/2)$ in \eqref{m} and  \cref{def:muq} to get $m_o(\mu)$ and $\mu_o^{(q)}$.

\begin{thm} 
\label{thm:oddortho}
{Let $\mu$ be a partition of length at most $n$ and $r \geq 1$  an integer.
 Then
 \begin{equation*}
 \label{sp-oo}
     \overline{p}_r \oo_{\mu} = \sum_{ \eta/\mu \,\in \, \BS(r)} (-1)^{\hgt(\eta/\mu)}  \oo_{\eta}
        +\sum_{\mu/\xi \, \in \, \BS(r)} (-1)^{\hgt(\mu/\xi)} \oo_{\xi}
       + \sum_{q=m_o(\mu)+1}^n (-1)^{p(q)-q+1} \oo_{\mu_o^{(q)}}.
 \end{equation*}}
 \end{thm}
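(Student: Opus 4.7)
The plan is to mimic the proof of \cref{thm:smp} essentially verbatim, with $\delta=(n-1/2,n-3/2,\dots,1/2)$ in place of $\delta=(n,n-1,\dots,1)$. The key observation that makes this transplant work is that \cref{lem:MN} was deliberately stated for tuples $\alpha \in \mathbb{N}^n \cup (\mathbb{N}+\tfrac{1}{2})^n$, so it is directly applicable to $\alpha = \mu+\delta$ when $\delta$ consists of half-integers. Since $\oo_\mu(X) = A_{\mu+\delta}/A_\delta$ by \eqref{oodef}, applying \cref{lem:MN} to $\alpha = \mu+\delta$ and dividing by $A_\delta$ immediately produces
\[
\overline{p}_r\, \oo_\mu = \sum_{q=1}^n \frac{A_{\mu+\delta+r\epsilon_q}}{A_\delta} + \sum_{q=1}^n \frac{A_{\mu+\delta-r\epsilon_q}}{A_\delta},
\]
which is the precise analogue of \eqref{1} in the symplectic proof.

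Next I would handle the two sums on the right exactly as in the proof of \cref{thm:smp}. For the first sum, fix $q$ and sort $\mu+\delta+r\epsilon_q$ in descending order; if two entries coincide the determinant vanishes, and otherwise the position $p(q)$ of the shifted entry is at most $q$, giving a sign $(-1)^{p(q)-q}$ and producing $\oo_\eta$ where $\eta/\mu$ is a border strip of height $q - p(q)$. This yields the first term $\sum_{\eta/\mu\in\BS(r)}(-1)^{\hgt(\eta/\mu)}\oo_\eta$. For the second sum, split according to whether $\mu_q+\delta_q-r \geq 0$ (equivalently $q \leq m_o(\mu)$) or $<0$: in the former case the usual rearrangement gives a border strip $\mu/\xi$ of height $p(q)-q$ and contributes $\sum_{\mu/\xi\in\BS(r)}(-1)^{\hgt(\mu/\xi)}\oo_\xi$; in the latter case we replace $\mu_q+\delta_q-r$ by its absolute value $r-(\mu_q+\delta_q)$, reorder into $\Delta_q$, discard the cases where an equality produces a zero determinant, and read off $\oo_{\mu_o^{(q)}}$ with sign $(-1)^{p(q)-q+1}$, producing the third term.

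The only thing that really needs to be checked, and which I expect to be the main (mild) obstacle, is that the case analysis of sign and vanishing is insensitive to the shift from integer $\delta$ to half-integer $\delta$. This is true because differences of coordinates of $\mu+\delta\pm r\epsilon_q$ remain integers (both $\mu$ and $r$ are integer, and the half-integer offsets cancel in any difference), so the condition ``two coordinates are equal'' and the permutation signs needed for reordering are identical to the symplectic computation. The combinatorial identification of $\eta/\mu$ and $\mu/\xi$ as border strips of the stated heights, and the description of $\mu_o^{(q)}$ as in \cref{def:muq} with the new $\delta$, then follows by the same bookkeeping, giving the claimed identity.
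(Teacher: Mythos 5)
Your proposal is correct and is exactly the paper's route: the paper proves this theorem by repeating the proof of \cref{thm:smp} with $\delta=(n-1/2,n-3/2,\dots,1/2)$, relying on the fact that \cref{lem:MN} is stated for $\alpha\in\mathbb{N}^n\cup(\mathbb{N}+\tfrac12)^n$ and that $\oo_\mu=A_{\mu+\delta}/A_\delta$. Your added observation that coordinate differences stay integral (so the vanishing conditions, signs, and the identification of border strips and of $\mu_o^{(q)}$ carry over unchanged) is precisely the verification the paper leaves implicit.
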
 

\begin{eg}
Suppose $n=3$, $\mu=(2,1)$ and $r=2$ and consider the product $\overline{p}_r\oo_\mu$.   Then
 $\mu+\delta=(9/2,5/2,1/2)$ and $m_o(\mu)=2.$  The partition $(4,1)$ in \eqref{eg:ooproduct} arises from adding a border strip of length $2$. If $q=3$, then $r-(\mu_q+\delta_q)=3/2$, $p(q)=3$ and $\mu_o^{3}=(2,1,1)$. Therefore, 
    \begin{equation}
    \label{eg:ooproduct}
{\overline{p}_3
\oo_{(2,1)}=\oo_{(4,1)}-\oo_{(2,1,1)}.}
\end{equation}
\end{eg} 



\bigskip

For $\alpha=(\alpha_1,\dots,\alpha_n) \in \mathbb{N}^n$, let \[
N_{\alpha}
=\det_{1\le i,j\le n}
\Bigl(x_i^{\alpha_j}+\x_i^{\alpha_j}\Bigr).
\]

\noindent {By \eqref{oedef}, $$\displaystyle \oe_\lambda(X)=\frac{2 \, N_{\lambda+\delta}}{(1+\delta_{\lambda_n,0}) \, N_\delta},$$ where $\delta=(n-1,\ldots,1,0)$.} An approach similar to that in the proof of \cref{lem:MN} yields the following result. 
\begin{lem}
\label{lem:even}
Suppose $\epsilon_j=(0,\dots,0,1,0,\dots,0)$.
We have 
    \begin{equation}
         \overline{p}_r N_{\alpha}  = 
     \sum_{j=1}^n N_{\alpha+r \epsilon_j}+\sum_{j=1}^n N_{\alpha-r \epsilon_j}.
    \end{equation}
\end{lem}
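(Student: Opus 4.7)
The plan is to prove \cref{lem:even} by a single application of the multilinearity of the determinant, sidestepping the induction on $n$ used for \cref{lem:MN}. The only computational input is the two-line identity
$$(x_i^r+\x_i^r)(x_i^{\alpha_j}+\x_i^{\alpha_j}) = (x_i^{\alpha_j+r}+\x_i^{\alpha_j+r}) + (x_i^{\alpha_j-r}+\x_i^{\alpha_j-r}),$$
valid for every $i,j$.

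Combined with column-$j$ multilinearity, this identity shows that $N_{\alpha+r\epsilon_j}+N_{\alpha-r\epsilon_j}$ equals the determinant $D_j$ obtained from the defining matrix of $N_\alpha$ by replacing each entry $x_i^{\alpha_j}+\x_i^{\alpha_j}$ of column $j$ by $(x_i^r+\x_i^r)(x_i^{\alpha_j}+\x_i^{\alpha_j})$, leaving all other columns unchanged. Expanding $D_j$ along column $j$, summing over $j$, and interchanging the order of summation then gives
$$\sum_{j=1}^n D_j \;=\; \sum_{i=1}^n (x_i^r+\x_i^r)\sum_{j=1}^n (-1)^{i+j}(x_i^{\alpha_j}+\x_i^{\alpha_j})\,M_{ij} \;=\; \sum_{i=1}^n (x_i^r+\x_i^r)\,N_\alpha \;=\; \overline{p}_r\,N_\alpha,$$
where $M_{ij}$ denotes the $(i,j)$-minor of the defining matrix of $N_\alpha$ and the inner sum is the row-$i$ cofactor expansion of $N_\alpha$. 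Comparing with the tautology $\sum_j D_j = \sum_j\bigl(N_{\alpha+r\epsilon_j}+N_{\alpha-r\epsilon_j}\bigr)$ then yields the claim.

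Since the argument requires neither induction nor sign bookkeeping, I do not anticipate any serious obstacle beyond verifying the routine preliminary identity. As a bonus, the same argument gives a short alternative proof of \cref{lem:MN}: one only replaces the starting identity by its direct analogue $(x_i^r+\x_i^r)(x_i^{\alpha_j}-\x_i^{\alpha_j}) = (x_i^{\alpha_j+r}-\x_i^{\alpha_j+r}) + (x_i^{\alpha_j-r}-\x_i^{\alpha_j-r})$ and runs through the multilinearity and cofactor steps verbatim.
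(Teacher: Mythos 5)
Your argument is correct, but it is organized differently from the paper's. The paper proves \cref{lem:even} the same way it proves \cref{lem:MN}: by induction on $n$, expanding the determinant along the last row and applying the inductive hypothesis to the $(n-1)\times(n-1)$ minors, which requires tracking the alternating signs of that expansion. You instead argue in one pass: the identity $(x_i^r+\x_i^r)(x_i^{\alpha_j}+\x_i^{\alpha_j})=(x_i^{\alpha_j+r}+\x_i^{\alpha_j+r})+(x_i^{\alpha_j-r}+\x_i^{\alpha_j-r})$ together with multilinearity in column $j$ gives $N_{\alpha+r\epsilon_j}+N_{\alpha-r\epsilon_j}=D_j$, and summing the column-$j$ cofactor expansions of the $D_j$ and swapping the order of summation turns each inner sum into a row-$i$ expansion of $N_\alpha$, yielding $\sum_j D_j=\overline{p}_r N_\alpha$; the minors $M_{ij}$ are unchanged precisely because only column $j$ was modified, which is the one deleted. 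This is a complete proof, it needs no induction or sign bookkeeping beyond the standard cofactor signs, and, as you note, the same template with $x_i^{\alpha_j}-\x_i^{\alpha_j}$ in place of $x_i^{\alpha_j}+\x_i^{\alpha_j}$ reproves \cref{lem:MN} (and works equally well for the half-integer exponents in the odd orthogonal case), so it unifies what the paper treats as two parallel inductions. What the paper's route buys is only that it reuses verbatim the machinery already set up for \cref{lem:MN}; your route is the shorter and more transparent of the two.
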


\cref{thm:evenortho} is obtained by working with $N_{\alpha}$ (resp. $\delta=(n-1,n-2,\dots,0)$) instead of $A_{\alpha}$ (resp. $\delta=(n,n-1,\dots,1)$) in the proof \cref{thm:smp}. In the statement of  \cref{thm:evenortho} below, take $\delta = (n-1,n-2,\dots,0)$ in \eqref{m} and \cref{def:muq} to get $m_e(\mu)$ and $\mu_e^{(q)}$.

\begin{thm} 
\label{thm:evenortho}
{Let $\mu$ be a partition of length at most $n$ and $r \geq 1$  an integer.
 Then
 \begin{equation*}
 \label{sp-eoo}
 \begin{split}
     \overline{p}_r \oe_{\mu} 
     = \sum_{ \eta/\mu \,\in \, \BS(r)} \frac{(-1)^{\hgt(\eta/\mu)}(1+\delta_{\eta_n,0})}{1+\delta_{\mu_n,0}}  & \oe_{\eta} 
        +
        \sum_{\mu/\xi \, \in \, \BS(r)} 
        \frac{(-1)^{\hgt(\mu/\xi)}(1+\delta_{\xi_n,0})
        }{1+\delta_{\mu_n,0}} \oe_{\xi}
       \\  & 
       + \sum_{q=m_e(\mu)+1}^n 
       \frac{(-1)^{p(q)-q}(1+\delta_{(\mu_e^{(q)})_n,0})}{1+\delta_{\mu_n,0}} \oe_{\mu_e^{(q)}}.
       \end{split}
 \end{equation*}} 
 \end{thm}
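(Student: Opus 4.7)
The plan is to mirror the proof of \cref{thm:smp} essentially verbatim, with three systematic substitutions: $N_\alpha$ in place of $A_\alpha$, the staircase $\delta=(n-1,n-2,\dots,0)$ in place of $\delta=(n,n-1,\dots,1)$, and \cref{lem:even} in place of \cref{lem:MN}. The starting point is
\[
\overline{p}_r\,\oe_\mu \;=\; \frac{2}{(1+\delta_{\mu_n,0})\,N_\delta}\left(\sum_{q=1}^n N_{\mu+\delta+r\epsilon_q} + \sum_{q=1}^n N_{\mu+\delta-r\epsilon_q}\right),
\]
which follows immediately from \eqref{oedef} and \cref{lem:even}.

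For each $q$, I would analyze $N_{\mu+\delta+r\epsilon_q}$ exactly as in the symplectic case: either two coordinates of the rearranged vector coincide (in which case the determinant vanishes) or, letting $p(q)$ denote the new position of $\mu_q+\delta_q+r$ after descending rearrangement, column permutations yield $N_{\mu+\delta+r\epsilon_q}=(-1)^{p(q)-q}N_{\eta+\delta}$ for a partition $\eta$ with $\eta/\mu\in\BS(r)$ and $\hgt(\eta/\mu)=q-p(q)$. Converting back through $N_{\eta+\delta}/N_\delta = \tfrac{1+\delta_{\eta_n,0}}{2}\oe_\eta$ produces the first sum of the claimed formula. The second sum, handling those $q\in[1,m_e(\mu)]$ with $\mu_q+\delta_q-r\geq 0$, is produced identically, giving the border-strip-removal term.

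The only genuine departure from \cref{thm:smp} occurs in the third sum, indexed by $q\in[m_e(\mu)+1,n]$, where $\mu_q+\delta_q-r<0$. In the symplectic setting, the identity $x_i^{-k}-\bar{x}_i^{-k}=-(x_i^k-\bar{x}_i^k)$ introduces an extra factor of $-1$ when the negative entry is reflected to a positive one, resulting in the sign $(-1)^{p(q)-q+1}$. Here, however, the analogous identity $x_i^{-k}+\bar{x}_i^{-k}=x_i^k+\bar{x}_i^k$ incurs \emph{no} extra sign, so the coefficient becomes $(-1)^{p(q)-q}$, matching the claim. Rearranging in descending order and invoking \cref{def:muq} with $\delta=(n-1,\dots,0)$ then identifies the resulting partition as $\mu_e^{(q)}$.

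The main technical obstacle I anticipate is bookkeeping the Kronecker factor $1+\delta_{\cdot,0}$ consistently across the three cases. Starting from the prefactor $(1+\delta_{\mu_n,0})^{-1}$, each conversion $N_{\lambda+\delta}/N_\delta \mapsto \oe_\lambda$ contributes $(1+\delta_{\lambda_n,0})/2$; the two factors of $2$ cancel, leaving the ratio $(1+\delta_{\lambda_n,0})/(1+\delta_{\mu_n,0})$ as the coefficient of $\oe_\lambda$ in each summand. Verifying that this ratio is produced correctly in all three terms -- particularly in the asymmetric cases where $\eta_n$, $\xi_n$, or $(\mu_e^{(q)})_n$ vanishes while $\mu_n$ does not (or vice versa) -- is the only point requiring real attention, though the bookkeeping remains routine.
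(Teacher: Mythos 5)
Your proposal is correct and matches the paper's intended argument, which is exactly the proof of \cref{thm:smp} run with $N_\alpha$, $\delta=(n-1,n-2,\dots,0)$ and \cref{lem:even} in place of $A_\alpha$, $\delta=(n,n-1,\dots,1)$ and \cref{lem:MN}. Your two explicit observations -- that $x^{-k}+\x^{-k}=x^{k}+\x^{k}$ removes the extra sign so the third sum carries $(-1)^{p(q)-q}$, and that each conversion $N_{\lambda+\delta}/N_\delta=\tfrac{1+\delta_{\lambda_n,0}}{2}\,\oe_\lambda$ yields the ratio $(1+\delta_{\lambda_n,0})/(1+\delta_{\mu_n,0})$ -- are precisely the points the paper leaves implicit, and they are handled correctly.
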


\begin{eg}
 Suppose $n=3$, $\mu=(2,1)$ and $r=3$ and consider the product $\overline{p}_r\oe_\mu$.   Then
 $\mu+\delta=(4,2,0)$ and $m_e(\mu)=1.$  The partitions in the first four summands of \eqref{eg:eoproduct} arise from adding and removing  border strips of length $3$. If $q=2$, then $p(q)=2$ and   $\mu_o^{(q)}=(2)$. If $q=3$, then $p(q)=2$ and $\mu_o^{(q)}=(2,2,2)$. Therefore, 
     \begin{equation}
    \label{eg:eoproduct}
\overline{p}_3
\oe_{(2,1)}=\oe_{(5,1)}-\oe_{(3,3)}- \frac{1}{2} \, \oe_{(2,2,2)}-\oe_{\emptyset}+ 
\oe_{(2)}- \frac{1}{2} \, \oe_{(2,2,2)}.
\end{equation}
 \end{eg}
\section{An orthosymplectic Murnaghan--Nakayama rule}\label{sec:osprule}

In this section, we prove a Murnaghan--Nakyama rule for orthosymplectic characters. To that end, we first prove some lemmas. Below we give an algebraic proof of the following lemma. We give a combinatorial interpretation of \eqref{4.1.2} at the end of the section.
 \begin{lem}
 \label{lem:interchange}
   Let $\underline{Y}=(y_1,\dots,y_{m-1})$ { and let $\lambda$ be a partition of length at most $n$}. Then 
\begin{equation}
\label{4.1.1}
     \begin{split}
    \sum_{\lambda/\nu \, \in \,  \VS} 
\left( 
\sum_{
\nu/\zeta \, \in \, \BS(r)} (-1)^{\hgt(\nu/\zeta)}
\spo_{\zeta}(X/\underline{Y}) 
\right) 
& y_m^{|\lambda|-|\nu|} \\
= \sum_{
\lambda/\xi \, \in \,  \BS(r)} 
(-1)^{\hgt(\lambda/\xi)} &
\left( \sum_{
\xi/\Omega \, \in \, \VS} 
\spo_{\Omega}(X/\underline{Y}) 
y_m^{|\xi|-|\Omega|}\right).  
     \end{split}
 \end{equation}
 \end{lem}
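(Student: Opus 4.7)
The plan is to reduce \eqref{4.1.1} to the commutativity of the skew operators $p_r^{\perp}$ and $e_k^{\perp}$ on $\Lambda$. Since $p_r$ and $e_k$ lie in the commutative ring $\Lambda$, their adjoints with respect to the Schur inner product also commute: $p_r^{\perp} e_k^{\perp} = e_k^{\perp} p_r^{\perp}$. By \eqref{pperp} and \eqref{eperp}, these adjoints encode precisely the two operations appearing on the two sides of \eqref{4.1.1}, namely signed border-strip removal of length $r$ and (unsigned) vertical-strip removal. Intuitively, the lemma asserts that the order in which these two removals are performed does not matter, once everything is grouped by the resulting partition.

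I would first reorganize both sides of \eqref{4.1.1} as sums over a common ``bottom'' partition $\sigma$ (equal to $\zeta$ on the left and to $\Omega$ on the right). On the left, $\nu/\sigma \in \BS(r)$ forces $|\nu|=|\sigma|+r$, so $y_m^{|\lambda|-|\nu|}=y_m^{|\lambda|-|\sigma|-r}$; on the right, $\lambda/\xi \in \BS(r)$ forces $|\xi|=|\lambda|-r$, so $y_m^{|\xi|-|\sigma|}=y_m^{|\lambda|-|\sigma|-r}$ as well. Setting $k=|\lambda|-|\sigma|-r$, the identity \eqref{4.1.1} therefore follows once one proves, for every partition $\sigma \subseteq \lambda$ and every $k \geq 0$, the purely combinatorial coefficient identity
\[
\sum_{\substack{\lambda/\nu \,\in\, \VS(k) \\ \nu/\sigma \,\in\, \BS(r)}} (-1)^{\hgt(\nu/\sigma)}
\;=\;
\sum_{\substack{\lambda/\xi \,\in\, \BS(r) \\ \xi/\sigma \,\in\, \VS(k)}} (-1)^{\hgt(\lambda/\xi)}.
\]
The left-hand sum is precisely the coefficient of $s_\sigma$ in $p_r^{\perp} e_k^{\perp} s_\lambda$: expanding $e_k^{\perp} s_\lambda = \sum_{\lambda/\nu\in\VS(k)} s_\nu$ via \eqref{eperp} and then applying \eqref{pperp} term by term yields exactly this sum. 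Symmetrically, the right-hand sum is the coefficient of $s_\sigma$ in $e_k^{\perp} p_r^{\perp} s_\lambda$, and the operator commutativity stated above forces equality.

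The only real obstacle is bookkeeping: one must verify that the $y_m$-exponents truly agree on both sides for each fixed $\sigma$, so that the problem genuinely reduces to an identity of scalar coefficients rather than one of $y_m$-polynomials, and one must confirm that the combinatorial identity---derived by comparing coefficients of $s_\sigma$ in Schur expansions---transfers to coefficients of $\spo_\sigma(X/\underline{Y})$. The latter transfer is automatic: once the scalar coefficients coincide partition-by-partition, multiplying the common factor $\spo_\sigma(X/\underline{Y})\, y_m^{|\lambda|-|\sigma|-r}$ into each $\sigma$-term and summing recovers \eqref{4.1.1}.
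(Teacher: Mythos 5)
Your proposal is correct and is essentially the paper's own argument: the paper likewise reduces \eqref{4.1.1} to a statement for each fixed $y_m$-degree (equivalently, fixed vertical-strip size), invokes the commutativity $p_r^{\perp}e_s^{\perp}=e_s^{\perp}p_r^{\perp}$ together with \eqref{pperp} and \eqref{eperp} to obtain the corresponding Schur-function identity, and then transfers it to the polynomials $\spo_{\gamma}(X/\underline{Y})$ by matching the coefficients attached to each partition. Your grouping by the bottom partition $\sigma$ and explicit check of the $y_m$-exponents is just a slightly more explicit bookkeeping of the same reduction, and your observation that the transfer to $\spo$ is automatic once the scalar coefficients agree is exactly the point the paper makes via linear independence of the Schur basis.
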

 \begin{proof}[Algebraic proof of \cref{lem:interchange}]
Since both sides of \eqref{4.1.1} can be considered as polynomials in $y_m$, {with coefficients given by vertical strips of fixed length}, it is sufficient to prove the following:
\begin{equation}
\label{4.1.2}
     \begin{split}
    \sum_{\lambda/\nu \, \in \,  \VS(s)} 
\left( 
\sum_{\nu/\zeta \, \in \, \BS(r)} (-1)^{\hgt(\nu/\zeta)}
\spo_{\zeta}(X/\underline{Y}) 
\right) 
 & 
\\
= \sum_{
\lambda/\xi \, \in \,  \BS(r)} 
(-1)^{\hgt(\lambda/\xi)} &
\left( \sum_{
\xi/\Omega \, \in \, \VS(s)} 
\spo_{\Omega}(X/\underline{Y}) \right).  
     \end{split}
 \end{equation}
for all $s \geq 0$. Since $p_r^{\perp}   e_s^{\perp} s_{\lambda}=e_s^{\perp}  p_r^{\perp} s_{\lambda},$ by \eqref{pperp} and \eqref{eperp}, 
 we have
 \[
  p_r^{\perp} \left(
 \sum_{\lambda/\nu \, \in \, \VS(s)}  s_{\nu}
 \right)
 =
 e_s^{\perp} \left(  \sum_{\lambda/\xi \, \in \, \BS(r)}  (-1)^{\hgt(\lambda/\xi)} s_{\xi} \right) 
  {\mbox{ and }}
 \]
 \begin{equation}
 \label{4.1.3}
 \sum_{\lambda/\nu \, \in \, \VS(s)}  
 \left(
 \sum_{\nu/\zeta \, \in \, \BS(r)}  (-1)^{\hgt(\nu/\zeta)} s_{\zeta}
 \right)
 =
 \sum_{\lambda/\xi \, \in \, \BS(r)}  (-1)^{\hgt(\lambda/\xi)} \left(  \sum_{\xi/\Omega \, \in \, \VS(s)}  s_{\Omega} \right).
 \end{equation}
 Note that \eqref{4.1.3} is the same as \eqref{4.1.2} if we replace $s_{\gamma}$ by $\spo_{\gamma}(X/\underline{Y})$ for all partitions $\gamma$. 
 Since $\{s_{\lambda}(X) \mid 
 \ell(\lambda) \leq n\}$ forms a basis for the ring of symmetric functions, the partitions involved on both sides (after cancellations, if any) of the above equality are the same. {It follows that} the equality in \eqref{4.1.1} also holds. This completes the proof.
 \end{proof}
  \begin{lem} 
  \label{lem:add}
  Let $\underline{Y}=(y_1,\dots,y_{m-1})$ { and let $\lambda$ be a partition of length at most $n$. Then }
 \begin{equation}
 \label{4.2.3}
     \begin{split}
        & \sum_{
         \lambda/\nu \, \in \,  \VS} 
         \sum_{
         \sigma/\nu \, \in \, \BS(r)} (-1)^{\hgt(\sigma/\nu)}
\spo_{\sigma}(X/\underline{Y}) 
y_m^{|\lambda|-|\nu|}
+
(-1)^{r-1} \sum_{\lambda/\nu \, \in \, \VS}  
\spo_{\nu}(X/\underline{Y}) y_m^{|\lambda|-|\nu|+r}
\\
 & \qquad \qquad \qquad \qquad = \sum_{
 \eta/\lambda \, \in \, \BS(r)}
(-1)^{\hgt(\eta/\lambda)} \spo_{\eta}(X/Y).
     \end{split}
 \end{equation}   
 \end{lem}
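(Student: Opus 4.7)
My plan is to prove \cref{lem:add} by mirroring the strategy used for \cref{lem:interchange}: reduce the identity to a combinatorial statement about partitions via an operator identity on $\Lambda$, verify it for Schur functions, and then transfer the result to orthosymplectic Schur functions by coefficient comparison. The difference from \cref{lem:interchange} is that here $e_k^\perp$ and $p_r$ no longer commute, and the second summand on the left-hand side of \eqref{4.2.3} will account for their commutator. The starting point is to apply \eqref{sporec} to the right-hand side of \eqref{4.2.3}, expanding $\spo_\eta(X/Y)=\sum_{\eta/\pi\in\VS}\spo_\pi(X/\underline{Y})\,y_m^{|\eta|-|\pi|}$, and then interchanging the order of summation. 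For each partition $\pi$ and integer $k$, the coefficient of $\spo_\pi(X/\underline{Y})\,y_m^k$ on the right becomes $\sum(-1)^{\hgt(\eta/\lambda)}$ summed over $\eta$ with $\eta/\lambda\in\BS(r)$ and $\eta/\pi\in\VS(k)$, so \cref{lem:add} reduces to a family of combinatorial identities indexed by $\pi$ and $k$.

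The key ingredient is the operator commutation relation
\[
e_k^\perp\, p_r \;=\; p_r\, e_k^\perp \,+\, (-1)^{r-1}\, e_{k-r}^\perp
\]
on $\Lambda$. I will establish this using the generating function $E(t)=\sum_k e_k t^k = \exp\bigl(\sum_{j\geq 1}\tfrac{(-1)^{j-1}}{j}\,p_j t^j\bigr)$, whose adjoint is $E(t)^\perp = \exp\bigl(\sum_{j\geq 1}\tfrac{(-1)^{j-1}}{j}\,p_j^\perp t^j\bigr)$ since $f\mapsto f^\perp$ is an algebra homomorphism on the commutative ring $\Lambda$. Because $[p_j^\perp,p_r]=r\,\delta_{jr}$ is a scalar, the standard identity $[e^A,B]=[A,B]\,e^A$, valid when $[A,B]$ is central, gives $[E(t)^\perp,p_r]=(-1)^{r-1}t^r E(t)^\perp$, and extracting the coefficient of $t^k$ yields the displayed relation.

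Applying this operator identity to $s_\lambda$ and expanding the three resulting terms via \cref{schurmn}, \eqref{pperp}, and \eqref{eperp}, equating the coefficient of $s_\alpha$ on both sides produces
\[
\sum_{\substack{\eta/\lambda\in\BS(r)\\ \eta/\alpha\in\VS(k)}}(-1)^{\hgt(\eta/\lambda)}
\;=\;\sum_{\substack{\lambda/\nu\in\VS(k)\\ \alpha/\nu\in\BS(r)}}(-1)^{\hgt(\alpha/\nu)}
\;+\;c,
\]
where $c=(-1)^{r-1}$ if $\lambda/\alpha\in\VS(k-r)$ and $c=0$ otherwise. Since this identity is purely numerical (independent of what the symbol $s_\alpha$ represents), I may substitute $\spo_\alpha(X/\underline{Y})$ for $s_\alpha$ throughout, multiply by $y_m^k$, sum over $k$, and relabel summation indices (with $\alpha\mapsto\sigma$ in the first term and $\alpha\mapsto\pi$ in the correction term), which reassembles exactly the left-hand side of \eqref{4.2.3}. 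The main obstacle I anticipate is the index bookkeeping during the double summation interchanges---in particular, verifying that $k-r=|\lambda|-|\pi|$ is the correct vertical-strip size for the correction term, and that the relabellings produce the exponents $y_m^{|\lambda|-|\nu|}$ and $y_m^{|\lambda|-|\nu|+r}$ appearing on the left of \eqref{4.2.3}---since the operator commutation itself is classical.
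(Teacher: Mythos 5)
Your proposal is correct, but it reaches \eqref{4.2.3} by a genuinely different route than the paper. The paper's proof never works with an operator commutator: it starts from the supersymmetric Murnaghan--Nakayama rule for hook Schur polynomials (\cref{thm:hook}, itself obtained by plethysm from the classical rule), splits $P_r(X/Y)=P_r(X/\underline{Y})+(-1)^{r-1}y_m^r$ and $\hs_{\lambda}(X/Y)=\sum_{\lambda/\nu\in\VS}\hs_{\nu}(X/\underline{Y})y_m^{|\lambda|-|\nu|}$, applies \cref{thm:hook} again in the variables $\underline{Y}$, and thereby obtains the exact analogue of \eqref{4.2.3} with $\hs$ in place of $\spo$; since the hook Schur polynomials form a basis of the supersymmetric ring, the identity is really an identity of integer coefficients indexed by partitions, and substituting $\spo$ gives the lemma. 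You instead derive that same coefficient identity inside $\Lambda$ from the commutation relation $e_k^{\perp}p_r=p_re_k^{\perp}+(-1)^{r-1}e_{k-r}^{\perp}$ (correctly proved via the Heisenberg relation $[p_j^{\perp},p_r]=r\delta_{jr}$ and the generating function for the $e_k$), applied to $s_{\lambda}$ and expanded with \cref{schurmn}, \eqref{pperp} and \eqref{eperp}, and then transfer to $\spo$ exactly as the paper does, after expanding the right-hand side with \eqref{sporec}; your bookkeeping of the correction term ($c=(-1)^{r-1}$ precisely when $\lambda/\alpha\in\VS(k-r)$, matching the exponent $y_m^{|\lambda|-|\nu|+r}$) checks out. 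What each approach buys: the paper's argument gets the $(-1)^{r-1}y_m^r$ correction for free from the extra-variable decomposition of $P_r$ and stays structurally parallel to the induction in \cref{thm:osmp-m}, while yours avoids hook Schur polynomials and plethysm entirely and unifies \cref{lem:interchange} (commuting $p_r^{\perp}$ with $e_s^{\perp}$) and \cref{lem:add} (the near-commutation of $e_k^{\perp}$ with multiplication by $p_r$) under one operator-theoretic framework, at the modest cost of establishing the classical commutator identity.
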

 \begin{proof}
By \cref{schurmn}, we have 
\begin{equation}
\label{4.2.1}
P_r(X/Y)\hs_{\lambda}(X/Y)=\sum_{\eta/\lambda \, \in\, \BS(r)} (-1)^{\hgt(\eta/\lambda)} \hs_{\eta}(X/Y).
\end{equation}
Since 
 \[
 P_r(X/Y) = P_{r}(X/\underline{Y})+(-1)^{r-1} y_m^r
 \text{ and } 
 \hs_{\lambda}(X/Y)=\sum_{\lambda/\nu \, \in \, \VS}  \hs_{\nu}(X/\underline{Y}) y_m^{|\lambda|-|\nu|},
\]
the equality in \eqref{4.2.1} is the same as
\begin{equation}
\begin{split}
\sum_{\lambda/\nu \, \in \, \VS}  P_{r}(X/\underline{Y}) & \hs_{\nu}(X/\underline{Y}) y_m^{|\lambda|-|\nu|} + (-1)^{r-1} \sum_{\lambda/\nu \, \in \, \VS}  \hs_{\nu}(X/\underline{Y}) y_m^{|\lambda|-|\nu|+r}
\\&
=\sum_{\eta/\lambda \, \in\, \BS(r)} (-1)^{\hgt(\eta/\lambda)} \hs_{\eta}(X/Y).
\end{split}
\end{equation} 
Applying \cref{thm:hook}, we have 
 \begin{equation}
 \label{4.2.2}
     \begin{split}
         \sum_{
         \lambda/\nu \, \in \,  \VS} & \left(\sum_{
         \sigma/\nu \, \in \, \BS(r)} (-1)^{\hgt(\sigma/\nu)}
\hs_{\sigma}(X/\underline{Y}) \right) y_m^{|\lambda|-|\nu|}
+
(-1)^{r-1} \sum_{\lambda/\nu \, \in \, \VS}  
\hs_{\nu}(X/\underline{Y}) y_m^{|\lambda|-|\nu|+r}
\\
 & \qquad \qquad \qquad
 =\sum_{\eta/\lambda \, \in\, \BS(r)} (-1)^{\hgt(\eta/\lambda)} \hs_{\eta}(X/Y).
     \end{split}
 \end{equation}   
Note that the equality in \eqref{4.2.2} is the same as the equality in \eqref{4.2.3} if we replace $\hs_{\gamma}(X/\underline{Y})$ by $\spo_{\gamma}(X/\underline{Y})$ for all partitions $\gamma$. 
Since $\{\hs_{\lambda}(X/Y) \mid 
\lambda_{n+1} \leq m\}$ forms a basis for the ring of supersymmetric functions, the partitions involved on both sides (after cancellations if any) of the above equation are same. So, the {equality in \eqref{4.1.3}} also holds. This completes the proof.
 \end{proof}

\bigskip
{We now prove an orthosymplectic Murnaghan--Nakayama rule.}

\begin{thm} 
\label{thm:osmp-m}
Let $\lambda$ be a partition of length at most $n$ and $r \geq 1$  an integer. Then
 \begin{equation*}
 \label{spo-MN-m}
 \begin{split}
     P_r(X,\X/Y) \spo_{\lambda}(X/Y)
       =\sum_{\eta/\lambda \, \in \, \BS(r)} (-1)^{\hgt(\eta/\lambda)} & \spo_{\eta}(X/Y)
       + \sum_{\lambda/\xi \, \in \,  \BS(r)} 
       (-1)^{\hgt(\lambda/\xi)} \spo_{\xi}(X/Y)
       \\
     + \sum_{{\substack{\mu \subset \lambda\\  \mu_n<r-1}}} 
   \Bigg( &
\sum_{q=m(\mu)+1}^n {(-1)^{p(q)-q+1}}
\sp_{\mu^{(q)}}(X) \Bigg)  
\s_{\lambda'/\mu'}(Y).
 \end{split}
 \end{equation*}
 \end{thm}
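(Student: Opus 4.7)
The plan is to use induction on $m$, the number of variables in $Y = (y_1,\ldots,y_m)$.

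For the base case $m=0$, we have $\spo_\lambda(X/\emptyset) = \sp_\lambda(X)$, $P_r(X,\X/\emptyset) = \overline{p}_r$, and $\s_{\lambda'/\mu'}(\emptyset)$ equals $1$ when $\mu=\lambda$ and $0$ otherwise. The third sum of the theorem therefore collapses to $\sum_{q=m(\lambda)+1}^n (-1)^{p(q)-q+1}\sp_{\lambda^{(q)}}(X)$ (this range being empty exactly when $\lambda_n \geq r-1$), and the identity reduces precisely to the symplectic Murnaghan--Nakayama rule of \cref{thm:smp}.

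For the inductive step, set $\underline{Y} = (y_1,\ldots,y_{m-1})$ and split
$$P_r(X,\X/Y) = P_r(X,\X/\underline{Y}) + (-1)^{r-1} y_m^r.$$
Combined with the recursion \eqref{sporec} $\spo_\lambda(X/Y) = \sum_{\lambda/\nu \in \VS} \spo_\nu(X/\underline{Y}) y_m^{|\lambda|-|\nu|}$, this yields
$$P_r(X,\X/Y)\,\spo_\lambda(X/Y) = \sum_{\lambda/\nu \in \VS} P_r(X,\X/\underline{Y})\,\spo_\nu(X/\underline{Y})\,y_m^{|\lambda|-|\nu|} + (-1)^{r-1} y_m^r \spo_\lambda(X/Y).$$
Applying the induction hypothesis to each product $P_r(X,\X/\underline{Y})\,\spo_\nu(X/\underline{Y})$ produces three contributions: an ``addition'' sum over $\sigma/\nu \in \BS(r)$, a ``removal'' sum over $\nu/\zeta \in \BS(r)$, and a third sum involving $\sp_{\mu^{(q)}}(X)\,\s_{\nu'/\mu'}(\underline{Y})$.

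For the addition contribution combined with the residual $(-1)^{r-1} y_m^r\spo_\lambda(X/Y)$ term, \cref{lem:add} (applied with $\lambda$ in the role of $\lambda$) collapses them to $\sum_{\eta/\lambda \in \BS(r)}(-1)^{\hgt(\eta/\lambda)}\spo_\eta(X/Y)$. For the removal contribution, \cref{lem:interchange} swaps the vertical- and border-strip summations, and the resulting inner vertical-strip sum reconstitutes $\spo_\xi(X/Y)$ via \eqref{sporec}, producing $\sum_{\lambda/\xi \in \BS(r)}(-1)^{\hgt(\lambda/\xi)}\spo_\xi(X/Y)$. For the third contribution, swap the order of summation so that $\mu$ is outermost (with $\mu\subset\lambda$ and $\mu_n<r-1$), extending the inner sum to all $\nu$ with $\lambda/\nu\in\VS$ (introducing only vanishing terms since $\s_{\nu'/\mu'}(\underline{Y})=0$ when $\mu\not\subset\nu$). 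Using $\lambda/\nu \in \VS \iff \lambda'/\nu' \in \HST$ and applying \eqref{ss-rec} to $\s_{\lambda'/\mu'}$ in the variable set $Y$ gives
$$\sum_{\substack{\mu \subset \nu \subset \lambda \\ \lambda/\nu \in \VS}} \s_{\nu'/\mu'}(\underline{Y})\,y_m^{|\lambda|-|\nu|} = \s_{\lambda'/\mu'}(Y),$$
yielding the third sum of the theorem.

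The main obstacle is the bookkeeping in the third summand: one must verify that every $\mu\subset\lambda$ with $\mu_n<r-1$ contributes exactly once (which follows by taking $\nu=\lambda$ in the inner vertical-strip sum, corresponding to $y_m^0=1$), and that the side condition $\mu_n<r-1$, inherited unchanged from the induction hypothesis, is preserved under the swap since $\mu$ itself is untouched. Once this is handled, the three reconstructed pieces coincide with the three sums in the theorem statement, completing the induction.
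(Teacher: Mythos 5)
Your proposal is correct and follows essentially the same route as the paper's proof: induction on $m$ via the splitting $P_r(X,\X/Y)=P_r(X,\X/\underline{Y})+(-1)^{r-1}y_m^r$ together with \eqref{sporec}, collapsing the third contribution with \eqref{ss-rec}, handling the removal terms with \cref{lem:interchange}, and combining the addition terms with the residual $y_m^r$ term via \cref{lem:add}. The only additions are routine bookkeeping checks (base case, the $\mu_n<r-1$ condition) that match the paper's argument.
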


 \begin{proof}[Proof of \cref{thm:osmp-m}]
  We proceed by induction on $m$, {the number of variables in $Y$}. If $m=0$, then {the result} holds by \eqref{sp-MN}.  Assume the result holds if $Y=(y_1,\dots,y_b)$ for $b<m$. Let $Y=(y_1,\dots,y_m)$ and $\underline{Y}=(y_1,\dots,y_{m-1})$. Then by \eqref{sporec},  
     \begin{equation}
     \label{3}
\spo_{\lambda}(X/Y)=\sum_{\lambda/\nu \, \in \, \VS}  \spo_{\nu}(X/\underline{Y}) y_m^{|\lambda|-|\nu|}.
\end{equation}
Since $P_r(X,\X/Y)=P_r(X,\X/\underline{Y})+(-1)^{r-1} y_m^{r}$, we have  
\begin{equation}
\label{2}
    \begin{split}
        P_r(X,\X&/Y)\spo_{\lambda}(X/Y)
        \\
         = & \sum_{
         \lambda/\nu \, \in \, \VS}  P_r(X,\X/\underline{Y}) \spo_{\nu}(X/\underline{Y}) y_m^{|\lambda|-|\nu|}
+
(-1)^{r-1} \sum_{
\lambda/\nu \, \in \, \VS}  
\spo_{\nu}(X/\underline{Y}) y_m^{|\lambda|-|\nu|+r}.
    \end{split}
\end{equation}

Now consider the first sum on the right-hand side of \eqref{2}. Using the induction hypothesis, 
\begin{equation*}
    \begin{split}
 & \sum_{
 \lambda/\nu \, \in \,  \VS}
    \left( \sum_{
    \sigma/\nu \, \in \, \BS(r)} (-1)^{\hgt(\sigma/\nu)}
\spo_{\sigma}(X/\underline{Y}) 
+
\sum_{
\nu/\zeta \, \in \, \BS(r)} (-1)^{\hgt(\nu/\zeta)}
\spo_{\zeta}(X/\underline{Y})
\right) y_m^{|\lambda|-|\nu|}\\
 & \qquad \qquad 
 +
 {\sum_{
 \lambda/\nu \, \in \,  \VS} 
 \left(  
 \sum_{\substack{\mu \subset \nu \\
 \mu_n < r-1}}
 \left(
 \sum_{q=m(\mu)+1}^n (-1)^{p(q)-q+1} \sp_{\mu^{(q)}} (X) \right)
 s_{\nu'/\mu'}(\underline{Y})
 \right)  y_m^{|\lambda|-|\nu|}}
    \end{split}
\end{equation*}

\begin{equation*}
    \begin{split}
= & \sum_{
 \lambda/\nu \, \in \,  \VS}
    \left( \sum_{
    \sigma/\nu \, \in \, \BS(r)} (-1)^{\hgt(\sigma/\nu)}
\spo_{\sigma}(X/\underline{Y}) 
+
\sum_{
\nu/\zeta \, \in \, \BS(r)} (-1)^{\hgt(\nu/\zeta)}
\spo_{\zeta}(X/\underline{Y})
\right) y_m^{|\lambda|-|\nu|}\\
 & \qquad \qquad 
 +
 {\sum_{\substack{\mu \subset \lambda \\
 \mu_n < r-1}}
 \left(
 \sum_{q=m(\mu)+1}^n (-1)^{p(q)-q+1} \sp_{\mu^{(q)}} (X) \right)
 s_{\lambda'/\mu'}(Y)}
    \end{split}
\end{equation*}

\begin{equation*}
    \begin{split}
= & \sum_{
 \lambda/\nu \, \in \,  \VS}
    \left( \sum_{
    \sigma/\nu \, \in \, \BS(r)} (-1)^{\hgt(\sigma/\nu)}
\spo_{\sigma}(X/\underline{Y}) 
\right) y_m^{|\lambda|-|\nu|} 
+ \sum_{
\lambda/\xi \, \in \, \BS(r)} (-1)^{\hgt(\lambda/\xi)}
\spo_{\xi}(X/Y)
\\
 & \qquad \qquad 
 +
  \sum_{\substack{\mu \subset \lambda \\
 \mu_n < r-1}}
 \left(
 \sum_{q=m(\mu)+1}^n (-1)^{p(q)-q+1} \sp_{\mu^{(q)}} (X) \right)
 s_{\lambda'/\mu'}(Y),
    \end{split}
\end{equation*}
where the first equality uses \eqref{ss-rec} and we use \cref{lem:interchange} and \eqref{3} to get the last equality. Adding the second sum from the right-hand side of \eqref{2}
and then applying \cref{lem:add} completes the proof.
 \end{proof}

\bigskip
 
The combinatorial procedure described above \cref{eg:comb} can be used to find the partitions $\mu^{(q)}$ in the third summand.

{ \begin{eg}
\label{eg:spo}
Let
$\lambda=(2,2), n=2, m=2$ and $r=3$. Four partitions yield non-zero $\mu^{(q)}$ in the third summand:  $\mu=(1)$ gives $\mu^{(2)}=(1,1)$; $\mu=(2)$ gives $\mu^{(2)}=(2,1)$, $\mu=(1,1)$ gives $\mu^{(2)}=(1)$ and $\mu=(2,1)$ gives $\mu^{(2)}=(2).$  We have
\begin{equation*}\begin{split}P_3(X, \X/Y)\spo_{(2,2)}(X/Y) =& \spo_{(5,2)}-\spo_{(4,3)}-\spo_{(2,2,2,1)}
+\spo_{(2,2,1,1,1)}\\
&- \sp_{(1,1)}(X)\s_{(2,2)/(1)}(Y)
 -\sp_{(2,1)}(X)\s_{(2,2)/(1,1)}(Y)\\
 & -\sp_{(1)}(X)\s_{(2,2)/(2)}(Y)
 -\sp_{(2)}(X)\s_{(2,2)/(2,1)}(Y).
\end{split}\end{equation*}
 \end{eg}}

\bigskip
\begin{rem}
{Note that it is not always possible to express the third summand as a linear combination of orthosymplectic Schur polynomials with the same number of variables. 
For instance, if $\lambda=(1)$, $n=2$, $m=1$ and $r=3$, we have
$$P_3(X, \X/Y)\spo_{(1)}(X/Y)=\spo_{(4)}(X/Y)-\spo_{(2,2)}(X/Y)+\spo_{(1,1,1,1)}(X/Y)-\sp_{(1,1)}(X).$$}
\end{rem}

\bigskip

\begin{cor}
    Let $\lambda$ be a partition of length at most $n$ such that $\lambda_n+1 \geq m+r$.
    Then
 \begin{equation*}
     P_r(X,\X/Y) \spo_{\lambda}(X/Y)
       =\sum_{\eta/\lambda \, \in \, \BS(r)} (-1)^{\hgt(\eta/\lambda)} \spo_{\eta}(X/Y)
       + \sum_{\lambda/\xi \, \in \,  \BS(r)} 
       (-1)^{\hgt(\lambda/\xi)} \spo_{\xi}(X/Y).
 \end{equation*}
\end{cor}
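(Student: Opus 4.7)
The plan is to invoke Theorem~\ref{thm:osmp-m} and to show that the hypothesis $\lambda_n+1\ge m+r$ forces the third summand there to vanish term by term; the first two summands already match the right-hand side of the corollary verbatim. I would begin by writing out the orthosymplectic Murnaghan--Nakayama expansion from Theorem~\ref{thm:osmp-m} and isolating the third sum,
\[
\sum_{\substack{\mu\subset\lambda\\ \mu_n<r-1}}
\Biggl(\sum_{q=m(\mu)+1}^n (-1)^{p(q)-q+1}\sp_{\mu^{(q)}}(X)\Biggr)\, s_{\lambda'/\mu'}(Y),
\]
so that the task reduces to checking that $s_{\lambda'/\mu'}(Y)=0$ for every $\mu\subset\lambda$ satisfying $\mu_n\le r-2$.

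To see this, I would use that $s_{\lambda'/\mu'}(Y)$ is a skew Schur polynomial in the $m$ variables $y_1,\ldots,y_m$, which by the semistandard tableau definition vanishes whenever some column of $\lambda'/\mu'$ contains more than $m$ boxes (column entries must strictly increase). A standard conjugation identity shows that the length of the $j$-th column of $\lambda'/\mu'$ equals $\lambda_j-\mu_j$. Applying this at $j=n$ and combining the bound $\mu_n\le r-2$ with the hypothesis $\lambda_n\ge m+r-1$ yields $\lambda_n-\mu_n\ge m+1$. Hence every term in the third summand carries the zero factor $s_{\lambda'/\mu'}(Y)=0$, so the third summand vanishes identically and only the two claimed border-strip sums remain.

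The argument is short; the only minor point that needs care is the conjugation step identifying row lengths of $\lambda/\mu$ with column lengths of $\lambda'/\mu'$, which is routine. Everything else is an immediate specialization of Theorem~\ref{thm:osmp-m}, so I do not anticipate any real obstacle.
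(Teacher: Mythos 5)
Your proposal is correct and is essentially the argument the paper leaves implicit: specialize Theorem~\ref{thm:osmp-m} and note that for every $\mu$ in the third sum one has $\mu_n\le r-2$, so the $n$-th column of $\lambda'/\mu'$ has $\lambda_n-\mu_n\ge m+1$ boxes and hence $s_{\lambda'/\mu'}(Y)=0$ in $m$ variables. The conjugation step (row lengths of $\lambda/\mu$ become column lengths of $\lambda'/\mu'$) is indeed routine, so no gap remains.
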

 

Now we write a combinatorial proof of the equality given in \eqref{4.1.2}.

\begin{proof}[Combinatorial proof of \eqref{4.1.2}] Recall the statement of \eqref{4.1.2}, which is as follows: 

 \begin{equation*}
 \label{switch}
     \begin{split}
    \sum_{\lambda/\nu \, \in \,  \VS(s)} 
\left( 
\sum_{\nu/\zeta \, \in \, \BS(r)} (-1)^{\hgt(\nu/\zeta)}
\spo_{\zeta}(X/\underline{Y}) 
\right) 
 & 
\\
= \sum_{
\lambda/\xi \, \in \,  \BS(r)} 
(-1)^{\hgt(\lambda/\xi)} &
\left( \sum_{
\xi/\Omega \, \in \, \VS(s)} 
\spo_{\Omega}(X/\underline{Y}) \right).   
     \end{split}
 \end{equation*}

We will show that if $\Omega$ can be obtained by first removing a length $r$ border strip from $\lambda$, followed by a vertical strip of length $s$, then either $\Omega$ results in a cancellation on the right-hand side of \eqref{4.1.2}  or $\Omega$ can also be obtained from first removing a length $s$ vertical strip from $\lambda$ followed by a length $r$ border strip. In the illustrative figures, border strips are marked in red and vertical strips are marked in blue.

Let $\lambda=(\lambda_1,\dots,\lambda_n)$ and $\lambda/\xi \in \BS(r)$. Then 
 \[
\xi = (\lambda_1,\dots,\lambda_{q-1},{\lambda_{q+1}-1}
,\dots,\lambda_p-1,{\lambda_q+p-q-r}
,\lambda_{p+1},\dots,\lambda_n),
 \]
for some $q \leq p$. Here $q$ is the highest row that contains a box in the border strip, $p$ is the lowest row that contains a box in the border strip and $\hgt(\lambda/\xi)=p-q$. Let $\xi/\Omega \in \VS(s)$.
Then there exist  $i_1,\dots,i_s$ such that
 \[
 \Omega_i=\begin{cases}
   \xi_i-1  & \text{ if } i \in \{i_1,\dots,i_s\}\\
     \xi_i & \text{ otherwise}.
 \end{cases}
 \]
These are the rows that contain boxes from the vertical strip.  

\vspace{.1 in}

\noindent \underline{Case 1 a)} Suppose $p \in \{i_1,\dots,i_s\}$, $\lambda_q=\lambda_{q+1}$, and 
$q \not \in \{i_1,\dots,i_s\}$, as in the left-hand configuration in 
\cref{fig:cancconfig}.  Consider
\[
\kappa=(\lambda_1,\dots,\lambda_{q-1}, \lambda_q,\underbrace{\lambda_{q+2}-1}_{\tiny \mbox{position } q+1},\dots,\lambda_p-1,\underbrace{\lambda_{q+1}+p-q-1-r}_{\tiny \mbox{position } p},\lambda_{p+1},\dots,\lambda_n).
\] We have $\lambda/\kappa \in \BS(r)$, but $\hgt(\lambda/\kappa)=p-q-1$.  Removing a vertical strip from $\kappa$
corresponding to rows $\{i_1,\dots,i_s,q\}/\{p\}$ also gives $\Omega$, as illustrated  below.  
\begin{figure}[H]
\centering
\ytableausetup{notabloids, smalltableaux}
\begin{ytableau}
\n\\
\n &\n & \n&\n & \n & \n & \n  & *(red) \\
\n\\
\n &\n & \n&\n & \n & \n & \n  & \n [\vdots]\\ 
\n &\n & \n&\n & \n & \n & \n  & \n\\
\n &\n & \n& \n & \n & \n & \n  & *(red) \\
 \n & \n & \n & \n & \n & \n & \n & \n \\
\n &\n & \n&\n & \n & \n  & \n [\iddots] \\ 
\n &\n & \n& \n & \n & \n & \n  & \n \\ 
*(blue) & *(red)  & \n & \n[\hdots] & \n & *(red)\\
\end{ytableau} \hspace{1.25cm}\raisebox{-.35in}{$\xleftrightarrow{\hspace{1cm}}$}
\ytableausetup{notabloids, smalltableaux}
\begin{ytableau}
\n\\
\n &\n & \n&\n & \n & \n & \n  & *(blue) \\
\n\\
\n &\n & \n&\n & \n & \n & \n  & \n [\vdots]\\ 
\n &\n & \n&\n & \n & \n & \n  & \n\\
\n &\n & \n& \n & \n & \n & \n  & *(red) \\
 \n & \n & \n & \n & \n & \n & \n & \n \\
\n &\n & \n&\n & \n & \n  & \n [\iddots] \\ 
\n &\n & \n& \n & \n & \n & \n  & \n \\ 
*(red) & *(red)  & \n & \n[\hdots] & \n & *(red)\\
\end{ytableau} 

\vspace{.4cm}
\caption{Cancellation configuration}
\label{fig:cancconfig}
\end{figure}
\noindent Since the attached signs are opposite, this case corresponds to a cancellation in the right-hand side of \eqref{4.1.2}. 

\vspace{.1 in}

\noindent \underline{Case 1 b)}  If
 $p \in \{i_1,\dots,i_s\}$ 
and $\lambda_{q} > \lambda_{q+1}$ or $q \in \{i_1,\dots,i_s\}$, 
consider $\nu$ defined as follows:  

 $$\nu_i = \begin{cases}
 \lambda_i-1 & \text{ if } i=q \\
     \lambda_{i}-1 & \text{ if } i \in \{i_1,\dots,i_s\} \cap ([1,q-1] \cup [p+1,n]) \\ 
     \lambda_{i}-1 &  \text{ if } i  \in \{i_1+1,\dots,i_s+1\} \cap [q+1,p] \\
   \lambda_{i} & \text{ otherwise}.
 \end{cases}$$  
Then $\lambda/\nu \in \VS(s)$, $\Omega \subset \nu$ and 
$\nu/\Omega \in \BS(r)$ with $\hgt(\nu/\Omega)=p-q$. $($See \cref{fig:config2} below, where $\lambda_q > \lambda_{q+1}$. A similar figure can be drawn when $q \in \{i_1,\dots,i_s\}.)$ Therefore, $\Omega$ also appears in the left-hand side of \eqref{4.1.2} with the same sign.

\begin{figure}[H]
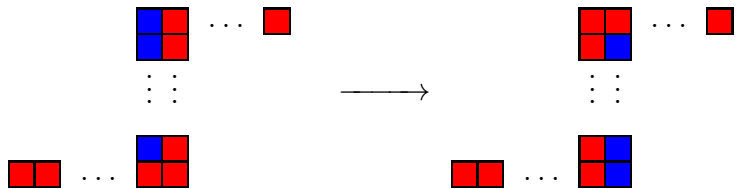

\centering
\ytableausetup{notabloids, smalltableaux}
\begin{ytableau}
\n &\n & \n&\n & \n & \n & \n  & *(red) & \n & \n [\hdots] & \n &  *(red) & *(red)\\
\n\\
\n &\n & \n&\n & \n & \n & \n  & \n [\vdots]\\ 
\n &\n & \n&\n & \n & \n & \n  & \n\\
\n &\n & \n& \n & \n & \n & \n  & *(red) \\
 \n & \n & \n & \n & \n & \n & \n & \n \\
\n &\n & \n&\n & \n & \n  & \n [\iddots] \\ 
\n &\n & \n& \n & \n & \n & \n  & \n \\ 
*(blue) & *(red)  & \n & \n[\hdots] & \n & *(red)\\
*(blue) & *(red) \\
\n   \\
 \n [\vdots] & \n [\vdots]\\
\n & \n \\
*(blue) & *(red)  \\
\end{ytableau}  
\hspace{.5cm}\raisebox{-.5in}{$\xrightarrow{\hspace{1cm}}$} 
\ytableausetup{notabloids, smalltableaux} 
  \begin{ytableau}
\n &\n & \n&\n & \n & \n & \n  & *(red) & \n & \n [\hdots] & \n &  *(red) & *(blue)\\
\n \\
\n &\n & \n&\n & \n & \n & \n  & \n [\vdots]\\ 
\n &\n & \n&\n & \n & \n & \n  & \n\\
\n &\n & \n& \n & \n & \n & \n  & *(red) \\
\n \\
\n &\n & \n&\n & \n & \n  & \n [\iddots] \\ 
\n &\n & \n& \n & \n & \n & \n  & \n \\ 
*(red) & *(red)  & \n & \n[\hdots] & \n & *(red)\\
*(red) & *(blue) \\
\n \\
\n [\vdots] & \n [\vdots]\\
\n & \n \\
*(red) & *(blue)  \\
\end{ytableau}  
\vspace{0.3in}
\caption{Configuration 1}
\label{fig:config2}
\end{figure}


\noindent \underline{Case 2 a)}  
Suppose $p \not \in \{i_1,\dots,i_s\}$ and $q-1 \not \in \{i_1,\dots,i_s\}$.

\vspace{.1 in}
\noindent Consider 
a partition $\sigma$ as follows, which is illustrated in \cref{fig:config1}:
 \[
 \sigma_i = \begin{cases}
     \lambda_{i}-1 & \text{ if } i \in \{i_1,\dots,i_s\} \cap ([1,q-2] \cup [p+1,n]) \\ 
     \lambda_{i}-1 &  \text{ if } i  \in \{i_1+1,\dots,i_s+1\} \cap [q+1,p]\\
   \lambda_{i} & \text{ otherwise}.
 \end{cases}
 \]

\begin{figure}[H]
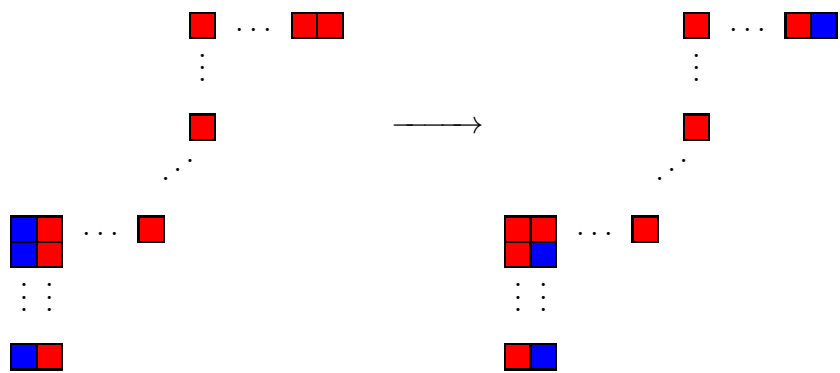

  \centering
    \ytableausetup{notabloids, smalltableaux}
  \begin{ytableau}               
\n & \n & \n & \n & \n &    *(blue) & *(red)  & \n & \n[\hdots] & \n & *(red)\\
\n & \n & \n & \n & \n &       *(blue) & *(red) \\ 
\n \\ 
\n &\n & \n & \n & \n & \n [\vdots] & \n [\vdots]\\
 \n &\n& \n & \n & \n & \n &       \n & \n \\
\n & \n &  \n & \n    &   \n &*(blue) & *(red)  \\
 *(red)  &  *(red) & \n & \n [\hdots]        & \n &*(red) & *(red)\\
\end{ytableau} 
\hspace{.5cm}\raisebox{-.35in}{$\xrightarrow{\hspace{1cm}}$} 
\ytableausetup{notabloids, smalltableaux}
  \begin{ytableau}               
    \n & \n & \n & \n & \n &*(red) & *(red)  & \n & \n[\hdots] & \n & *(red)\\
\n & \n & \n & \n & \n &       *(red) & *(blue) \\  
\n \\
\n &\n & \n & \n & \n &  \n [\vdots] & \n [\vdots]\\
\n &\n& \n & \n & \n & \n &  \n & \n \\
\n & \n &  \n & \n    &   \n &*(red) & *(blue)  \\
*(red)  &  *(red) & \n & \n [\hdots] & \n &*(red) & *(blue)\\
\end{ytableau}
\vspace{0.3in}
\caption{Configuration 2}
\label{fig:config1}
\end{figure}
Here $\lambda/\sigma \in \VS(s)$, $\Omega \subset \sigma$,
$\sigma/\Omega \in \BS(r)$ and $\hgt(\sigma/\Omega)=p-q$.  Thus $\Omega$ also appears in the left-hand side of \eqref{4.1.2} with the same coefficient.

 \vspace{.1 in}
\noindent \underline{Case 2 b)} 
Suppose $p \not \in \{i_1,\dots,i_s\}$ and $q-1 \in \{i_1,\dots,i_s\}$. 
\vspace{.1 in}

\noindent If $\lambda_{q-1} > \lambda_q$, consider the partition $\upsilon$ below (see \cref{fig:confignew}):
  \[
 \upsilon_i = \begin{cases}
     \lambda_{i}-1 & \text{ if } i \in \{i_1,\dots,i_s\} \cap ([1,q-1] \cup [p+1,n]) \\ 
     \lambda_{i}-1 &  \text{ if } i  \in \{i_1+1,\dots,i_s+1\} \cap [q+1,p]\\
   \lambda_{i} & \text{ otherwise.}
 \end{cases}
 \]

\begin{figure}[H]
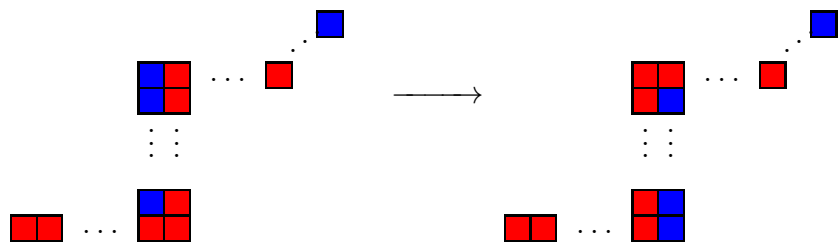

  \centering
    \ytableausetup{notabloids, smalltableaux}
  \begin{ytableau}               
\n &  \n & \n & \n & \n &   \n &\n & \n& \n & \n & \n & \n & \n[\hdots] &\n  & *(blue)\\
\n & \n & \n & \n & \n &    *(blue) & *(red)  & \n & \n[\hdots] & \n & *(red) \\
\n & \n & \n & \n & \n &       *(blue) & *(red) \\ 
\n \\ 
\n &\n & \n & \n & \n & \n [\vdots] & \n [\vdots]\\
 \n &\n& \n & \n & \n & \n &       \n & \n \\
\n & \n &  \n & \n    &   \n &*(blue) & *(red)  \\
 *(red)  &  *(red) & \n & \n [\hdots]        & \n &*(red) & *(red)\\
\end{ytableau} 
\hspace{.5cm}\raisebox{-.35in}{$\xrightarrow{\hspace{1cm}}$} 
\ytableausetup{notabloids, smalltableaux}
  \begin{ytableau}               
\n &  \n & \n & \n & \n &   \n &\n & \n& \n & \n & \n & \n & \n[\hdots] & \n & *(blue)\\
    \n & \n & \n & \n & \n &*(red) & *(red)  & \n & \n[\hdots] & \n & *(red)\\
\n & \n & \n & \n & \n &       *(red) & *(blue) \\  
\n \\
\n &\n & \n & \n & \n &  \n [\vdots] & \n [\vdots]\\
\n &\n& \n & \n & \n & \n &  \n & \n \\
\n & \n &  \n & \n    &   \n &*(red) & *(blue)  \\
*(red)  &  *(red) & \n & \n [\hdots] & \n &*(red) & *(blue)\\
\end{ytableau}
\vspace{0.3in}
\caption{Configuration 3}
\label{fig:confignew}
\end{figure}

If $\lambda_{q-1} = \lambda_q$ and $\lambda_p-\lambda_q-p+q+r=1$, let $\upsilon$ be as follows (see \cref{fig:confignew-2}):
 \[
 \upsilon_i = \begin{cases}
 \lambda_i-1 & \text{ if } i=p\\
   \lambda_{i}-1 & \text{ if } i \in \{i_1,\dots,i_s\} \cap ([1,q-2] \cup [p+1,n]) \\
 \lambda_{i}-1    &  \text{ if } i \in 
  \{i_1+1,\dots,i_s+1\}
 \cap [q+1,p],\\
   \lambda_i   & \text{ otherwise.}
 \end{cases}
 \]

\begin{figure}[H]
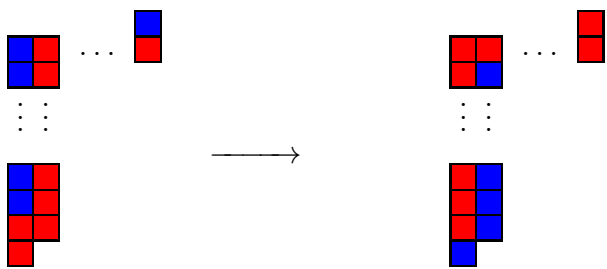

  \centering
    \ytableausetup{notabloids, smalltableaux}
  \begin{ytableau}               
\n &  \n & \n & \n & \n &   \n &\n & \n& \n & \n  & *(blue)\\ 
\n & \n & \n & \n & \n &    *(blue) & *(red)  & \n & \n[\hdots] & \n & *(red) \\
\n & \n & \n & \n & \n &       *(blue) & *(red) \\ 
\n \\ 
\n &\n & \n & \n & \n & \n [\vdots] & \n [\vdots]\\
 \n &\n& \n & \n & \n & \n &       \n & \n \\
\n & \n &  \n & \n    &   \n &*(blue) & *(red)  \\
 \n  &  \n & \n & \n     & \n &*(blue) & *(red)\\
 \n  &  \n & \n & \n     & \n &*(red) & *(red)\\
 \n  &  \n & \n & \n     & \n  & *(red)
\end{ytableau} 
\hspace{.5cm}\raisebox{-.665in}{$\xrightarrow{\hspace{1cm}}$} 
\ytableausetup{notabloids, smalltableaux}
  \begin{ytableau}               
\n &  \n & \n & \n & \n &   \n &\n & \n& \n & \n  & *(red)\\ 
\n & \n & \n & \n & \n &    *(red) & *(red)  & \n & \n[\hdots] & \n & *(red) \\
\n & \n & \n & \n & \n &       *(red) & *(blue) \\ 
\n \\ 
\n &\n & \n & \n & \n & \n [\vdots] & \n [\vdots]\\
 \n &\n& \n & \n & \n & \n &       \n & \n \\
\n & \n &  \n & \n    &   \n &*(red) & *(blue)  \\
 \n  &  \n & \n & \n     & \n &*(red) & *(blue)\\
 \n  &  \n & \n & \n     & \n &*(red) & *(blue)\\
 \n  &  \n & \n & \n     & \n  & *(blue)
\end{ytableau}
\vspace{0.3in}
\caption{Configuration 4}
\label{fig:confignew-2}
\end{figure} 

In both of the cases above illustrated by \cref{fig:confignew} and \cref{fig:confignew-2}, $\lambda/\upsilon \in \VS(s)$, $\Omega \subset \upsilon$, $\upsilon/\Omega \in \BS(r)$ and $\hgt(\lambda/\upsilon)=p-q$, so $\Omega$ appears in the left-hand side of \eqref{4.1.2} with the same sign. 

{Finally, if $\lambda_{q-1} = \lambda_q$ and $\lambda_p-\lambda_q-p+q+r>1$,
then
$\Omega$ can be obtained from 
\[
\rho = (\lambda_1,\dots,\lambda_{q-2},\lambda_{q-1}-1,\lambda_{q+1}-1,\dots,\lambda_p-1,\lambda_{q}+p-q-r+1,\lambda_{p+1},\dots,\lambda_n),
\]
by removing a vertical strip corresponding to rows $\{i_1,\dots,i_s,p\}/\{q-1\}$ (see \cref{fig:confignew-1}).

\begin{figure}[H]
  \centering
    \ytableausetup{notabloids, smalltableaux}
  \begin{ytableau}               
\n &  \n & \n & \n & \n &   \n &\n & \n& \n & \n & *(blue)\\ 
\n & \n & \n & \n & \n &    *(blue) & *(red)  & \n & \n[\hdots] & \n & *(red) \\
\n & \n & \n & \n & \n &       *(blue) & *(red) \\ 
\n \\ 
\n &\n & \n & \n & \n & \n [\vdots] & \n [\vdots]\\
 \n &\n& \n & \n & \n & \n &       \n & \n \\
\n & \n &  \n & \n    &   \n &*(blue) & *(red)  \\
 *(red)  &  *(red) & \n & \n [\hdots]        & \n &*(red) & *(red)\\
\end{ytableau} 
\hspace{.5cm}\raisebox{-.35in}{$\xleftrightarrow{\hspace{1cm}}$} 
\ytableausetup{notabloids, smalltableaux}
 \begin{ytableau}               
\n &  \n & \n & \n & \n &   \n &\n & \n& \n & \n & *(red)\\ 
\n & \n & \n & \n & \n &    *(blue) & *(red)  & \n & \n[\hdots] & \n & *(red) \\
\n & \n & \n & \n & \n &       *(blue) & *(red) \\ 
\n \\ 
\n &\n & \n & \n & \n & \n [\vdots] & \n [\vdots]\\
 \n &\n& \n & \n & \n & \n &       \n & \n \\
\n & \n &  \n & \n    &   \n &*(blue) & *(red)  \\
 *(blue)  &  *(red) & \n & \n [\hdots]        & \n &*(red) & *(red)\\
\end{ytableau} 
\vspace{0.3in}
\caption{Cancellation Configuration}
\label{fig:confignew-1}
\end{figure} 
Here $\lambda/\rho \in \BS(r)$ has height $p-q+1$, so this corresponds to a cancellation in the right side of \eqref{4.1.2}.}  One can describe similar cancellations that occur in  the left-hand side of \eqref{4.1.2} and, since each of the correspondences is reversible, this completes the proof.
\end{proof}

\begin{eg} In the two examples below, we illustrate the correspondence given above.
The diagrams on the left correspond to removing a border strip followed by a vertical strip and those on the right demonstrates how the same shape can be obtained by first removing a vertical strip followed by a border strip.

$$
\ydiagram[*(blue)]{4+1,0,2+1,2+1,1+1}
*[*(red)]
{5+2,4+2,3+2,3+1}
*[*(white)]{7,6,5,4,2}\ \
 \raisebox{-.25in}{$\xrightarrow{\hspace{1cm}}$} \ \
\ydiagram[*(red)]{4+2,4+1,2+3,2+1}
*[*(blue)]
{6+1,5+1,0,3+1,1+1}
*[*(white)]{7,6,5,4,2}\hspace{2cm} \ydiagram[*(blue)]{4+1,0,2+1,2+1}
*[*(red)]
{5+2,4+2,3+2,3+1,2+2}
*[*(white)]{7,6,5,4,4}\ \
 \raisebox{-.25in}{$\xrightarrow{\hspace{1cm}}$} \ \
\ydiagram[*(red)]{4+3,4+1,2+3,2+1,2+1}
*[*(blue)]
{0,5+1,0,3+1,3+1}
*[*(white)]{7,6,5,4,4}$$
\end{eg}

\bigskip

\noindent {\bf Acknowledgement.}  The authors wish to thank two anonymous referees who provided useful suggestions that improved the paper.
\bibliography{Bibliography}
\bibliographystyle{alpha}
\end{document}